\newtheorem{theorem}{Theorem}[section]
\newtheorem{lemma}[theorem]{Lemma}
\newtheorem{e-proposition}[theorem]{Proposition}
\newtheorem{corollary}[theorem]{Corollary}
\newtheorem{e-definition}[theorem]{Definition\rm}
\newtheorem{conjecture}[theorem]{Conjecture}
\newtheorem{remark}{\it Remark\/}
\DeclareMathOperator{\codim}{codim}
\title{Some Remarks on Generic Complete Intersection Varieties}
\author{Damian Brotbek}
\begin{document}
\maketitle

\begin{abstract}
We prove an existence theorem for jet differentials on complete intersection varieties that generalizes a theorem of S. Diverio. We also show that one can readily deduce hyperbolicity for generic complete intersections of high codimension and high multidegree from the papers $\cite{DMR10}$ and $\cite{D-T10}$. And finally we prove the numerical aspect of a conjecture of O. Debarre.
\end{abstract}

\section{Introduction}
We will consider three closely related aspects of complete intersection varieties of high multidegree, the existence of jet differentials, hyperbolicity and the positivity of the cotangent bundle. The first aspect is motivated by two theorems of S. Diverio (see \cite{Div08} and \cite{Div09}).
In \cite{Div09}, he proved a nonvaninshing theorem for jet differentials on hypersurfaces of high degree.

\begin{theorem}\label{nonvanishing}
\emph{(\cite{Div09} Theorem 1)}
Let $X\subset \mathbb{P}^{n+1}$ a smooth projective hypersurface and $a\in \mathbb{N}$. Then there exists a positive integer $d$ such that 
$$H^0(X, E_{k,m}\Omega_X\otimes \mathcal{O}_X(-a))\neq 0$$
if $k\geqslant n$, $\deg(X)\geqslant d$ and $m$ large enough.
\end{theorem}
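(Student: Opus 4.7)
The plan is to lift the question from $X$ to Demailly's projectivized $k$-jet tower $\pi_k\colon X_k \to X$. Recall that $X_k$ is a smooth variety of dimension $N := n + k(n-1)$ carrying a tautological line bundle $\mathcal{O}_{X_k}(1)$ whose direct image satisfies $(\pi_k)_*\mathcal{O}_{X_k}(m) = E_{k,m}\Omega_X$. In particular,
\[
H^0\bigl(X, E_{k,m}\Omega_X \otimes \mathcal{O}_X(-a)\bigr) \ = \ H^0\bigl(X_k,\,\mathcal{O}_{X_k}(m)\otimes \pi_k^*\mathcal{O}_X(-a)\bigr),
\]
so it suffices to produce a nonzero section of the right-hand line bundle on $X_k$ for some $m$.

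To produce such a section I would invoke the algebraic holomorphic Morse inequalities of Demailly--Trapani. These require expressing the line bundle in question (or a multiple of it) as a $\mathbb{Q}$-difference $A - B$ of two nef classes on $X_k$. A standard decomposition of $\mathcal{O}_{X_k}(1)$ as such a difference is provided by the weighted tautological classes coming from the iterated projective bundle structure $X_j = \mathbb{P}(V_{j-1}) \to X_{j-1}$; the negative twist $\pi_k^*\mathcal{O}_X(-a)$ will be absorbed into $A$ by using the positivity that $\Omega_X$ acquires when $\deg X$ is large (via the normal bundle sequence). Morse then yields
\[
h^0\bigl(X_k, \mathcal{O}_{X_k}(m)\otimes \pi_k^*\mathcal{O}_X(-a)\bigr) \ \geq\ \frac{m^N}{N!}\bigl(A^N - N\, A^{N-1}\!\cdot B\bigr) + o(m^N),
\]
so the problem reduces to showing that the top intersection number $A^N - N A^{N-1}\!\cdot B$ on $X_k$ is strictly positive.

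This intersection number is computed by pushing down along the tower: the Grothendieck relation on each stage $X_j \to X_{j-1}$ expresses the tautological class $c_1(\mathcal{O}_{X_j}(1))^{n-1}$ in terms of lower powers with coefficients in the Chern classes of $\Omega_{X_{j-1}}$, reducing the top product on $X_k$ to a universal polynomial in the Chern classes of $\Omega_X$ evaluated on $X$. For a degree $d$ hypersurface $X \subset \mathbb{P}^{n+1}$, the conormal exact sequence makes every $c_i(\Omega_X)$ an explicit polynomial in $d$ and $h = c_1(\mathcal{O}_X(1))$, so $A^N - N A^{N-1}\!\cdot B$ becomes a polynomial in $d$ (and $a$) whose leading coefficient in $d$ can be isolated.

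The main obstacle is the combinatorial computation showing that, precisely under the hypothesis $k \geq n$, the leading coefficient in $d$ of the Morse number is strictly positive; this is Diverio's sharpening of Demailly's earlier bound and is typically handled via Schur polynomial identities, recognizing the answer as a sum of nonnegative symmetric functions in the Chern roots of $\Omega_X$. Once that positivity is established, one first chooses $d$ so large that this leading term dominates both the correction $aN\, A^{N-1}\!\cdot \pi_k^*h$ coming from the twist and all subleading powers of $d$; then one chooses $m$ large enough that the $o(m^N)$ error term in Morse is negligible. The resulting inequality $h^0 > 0$ gives the desired nonzero jet differential.
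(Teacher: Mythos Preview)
The paper does not prove this statement separately---it is quoted from \cite{Div09}---but recovers it as the $c=1$, $\kappa=n$ case of Theorem~\ref{thmjet}, and your outline matches that proof essentially line for line: lift to the jet tower, write the relevant bundle as a difference of the nef classes $F=L_\kappa\otimes\cdots\otimes L_1$ and $G=\pi_\kappa^*\mathcal{O}_X(m+a)$ built from the weighted tautological bundles $L_k$, apply algebraic holomorphic Morse inequalities, push the resulting intersection number down the tower, and check that the leading term in $d$ is positive. The one place the paper (following \cite{Div09}) diverges from your sketch is the final combinatorics: rather than Schur polynomial identities, the argument tracks the Segre classes $s_i(\Omega_X)$ through the recursion~\eqref{formulasegre} and Lemmas~\ref{intersection} and~\ref{technical}, ultimately reducing the positivity of the leading coefficient to the single estimate $\int_X s_n(\Omega_X)\sim d^{\,n+1}$, which is read off directly from the conormal sequence.
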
 

He also proved a vanishing theorem for jet differentials on complete intersection varieties.

\begin{theorem}\label{vanishing} 
\emph{(\cite{Div08} Theorem 7)}
Let $X$ be a complete intersection variety in $\mathbb{P}^N$ of dimension $n$ and codimension $c$.
For all $m\geqslant 1$ and $1\leqslant k < \lceil\frac{n}{c}\rceil$ one has
$$H^0(X,E_{k,m}\Omega_X)=0.$$ 
\end{theorem}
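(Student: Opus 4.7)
The plan is to combine the standard Demailly filtration of $E_{k,m}\Omega_X$ with a Bott-type cohomology computation on the ambient projective space, carried out via the conormal exact sequence.

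First, I would replace $E_{k,m}\Omega_X$ by its associated graded bundle. The Demailly--Semple construction endows $E_{k,m}\Omega_X$ with a natural filtration whose associated graded is a direct sum of terms
$$S^{\ell_1}\Omega_X \otimes S^{\ell_2}\Omega_X \otimes \cdots \otimes S^{\ell_k}\Omega_X, \qquad \ell_1+2\ell_2+\cdots+k\ell_k = m.$$
A nonzero global section of $E_{k,m}\Omega_X$ would yield a nonzero global section of some such summand, which after decomposition into irreducible $GL_n$-representations reduces the problem to showing $H^0(X, \Gamma^\lambda \Omega_X) = 0$ for every partition $\lambda$ of length at most $k$ arising in this way.

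Next, I would exploit the conormal exact sequence
$$0 \to N^*_{X/\mathbb{P}^N} \to \Omega_{\mathbb{P}^N}|_X \to \Omega_X \to 0,$$
with $N^*_{X/\mathbb{P}^N} \cong \bigoplus_{i=1}^c \mathcal{O}_X(-d_i)$, in order to filter $\Gamma^\lambda \Omega_X$ by Schur-functor pieces involving $\Omega_{\mathbb{P}^N}|_X$ twisted by negative line bundles. The graded pieces have the shape $\Gamma^\mu \Omega_{\mathbb{P}^N}|_X \otimes \mathcal{O}_X(-p)$, and the group $H^0$ of each such piece on $X$ is controlled, via the Koszul resolution of $\mathcal{O}_X$ on $\mathbb{P}^N$, by groups
$$H^q\bigl(\mathbb{P}^N, \Gamma^\mu \Omega_{\mathbb{P}^N} \otimes \mathcal{O}_{\mathbb{P}^N}(-p - \textstyle\sum_{i\in I} d_i)\bigr), \qquad I \subset \{1,\ldots,c\}, \ 0 \leq q \leq c.$$
By Bott's formula on $\mathbb{P}^N$ each such group vanishes as soon as the combined data of $\ell(\mu)$, the Koszul index $q$, and the (negative) twist lie in the Bott vanishing region; matching this against the numerical data from the Demailly filtration should yield the condition $kc < n$, equivalently $k < \lceil n/c \rceil$.

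The main obstacle will be reaching the \emph{sharp} threshold $k < \lceil n/c \rceil$: a direct, naive bookkeeping along these lines tends to give only a suboptimal bound such as $k+c \leq n$. The genuine work is to track how the partition $\lambda$ (whose length is bounded by $k$) interacts with the conormal filtration, and how the twists contributed by the defining degrees $d_i$ offset the Koszul differentials, so as to trade partition length against homological index in Bott's formula in an optimal way. This combinatorial analysis is, I expect, where the argument becomes genuinely nontrivial.
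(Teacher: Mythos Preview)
This theorem is not proved in the present paper at all: it is simply quoted from \cite{Div08} (Theorem~7 there) as motivation for Theorem~\ref{thmjet}. Consequently there is no ``paper's own proof'' to compare your proposal against.

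For context, Diverio's original argument in \cite{Div08} follows broadly the same architecture you outline: one reduces, via the filtration on $E_{k,m}\Omega_X$, to the vanishing of $H^0(X,\Gamma^{\lambda}\Omega_X)$ for partitions $\lambda$ of length at most $k$, and then invokes the Br\"uckmann--Rackwitz vanishing theorem for Schur powers of the cotangent bundle of a complete intersection in $\mathbb{P}^N$. That result already packages the delicate bookkeeping you anticipate (conormal sequence, Koszul resolution, Bott vanishing) into a clean numerical criterion: $H^0(X,\Gamma^{\lambda}\Omega_X)=0$ whenever $\sum_i \min(\lambda_i,c) < n$, which for $\ell(\lambda)\leq k$ is exactly $kc<n$, i.e.\ $k<\lceil n/c\rceil$. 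So your worry about reaching the \emph{sharp} threshold is legitimate, but the resolution is not a new combinatorial argument; it is to quote Br\"uckmann--Rackwitz rather than redo Bott's formula by hand. If you insist on unwinding everything down to Bott, you will indeed have to be careful to let the partition entries beyond $c$ saturate rather than contribute their full value, which is precisely the trade-off that produces $\min(\lambda_i,c)$ in the criterion.
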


It seems therefore natural to look for the nonvanishing of $H^0(X,E_{\lceil\frac{n}{c}\rceil,m}\Omega_X)$ when $X$ is a smooth complete intersection of dimension $n$ and codimension $c$ of high multidegree. This is the content of Theorem $\ref{thmjet}$.
The second aspect is motivated by the results from \cite{DMR10} and \cite{D-T10}. They prove that given a generic hypersurface $X$ of high degree in $\mathbb{P}^N$ there exists an algebraic subspace of codimension two in $X$ that contains all entire curves of $X$, (the existence of such a algebraic subspace is proven in \cite{DMR10}, and the "codimension two" raffinement is proven in \cite{D-T10}). With this, one can easily deduce better results for generic complete intersections of higher codimensions, simply by "moving" the hypersurfaces that we are intersecting. In particular we get hyperbolicity when $3\codim_{\mathbb{P}^N}(X) \geqslant  \dim(X)$, for a generic complete intersection variety $X$ of high multidegree. This is the content of Corollary $\ref{HCI}$. The last aspect is motivated by a conjecture of Olivier Debarre. In \cite{Deb05}, he conjectures that generic complete intersection varieties of high codimension in $\mathbb{P}^N$ and of high multidegree have ample cotangent bundle. In Theorem $\ref{numpos}$ we prove that the Chern classes satisfy all the positivity conditions that one might expect (according to \cite{F-L83}). Note that all those results would be immediat consequences of a more general conjecture raised in \cite{D-T10}.

\begin{conjecture} Let $X\subset \mathbb{P}^N$ be a generic complete intersection variety of dimension $n$ and codimension $c$, then $E_{k,m}\Omega_X$ is ample as soon as $k\geqslant \lceil\frac{n}{c}\rceil$.
\end{conjecture}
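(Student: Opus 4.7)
The plan is to attack the conjecture by combining the existence of jet differentials at the critical threshold (provided by Theorem \ref{thmjet}) with the numerical positivity of Chern classes (Theorem \ref{numpos}), and then upgrading from ``sections exist plus numbers are positive'' to genuine ampleness via a Nakai–Moishezon argument on the Demailly–Semple tower $\pi_k \colon X_k \to X$.

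First I would strengthen Theorem \ref{thmjet} from a nonvanishing statement to a global generation statement: for $k = \lceil n/c \rceil$, for a generic complete intersection $X$ of sufficiently high multidegree, and for $m$ large enough, construct a vector subspace $V \subset H^0(X, E_{k,m}\Omega_X \otimes \mathcal{O}_X(-a))$ whose joint base locus on $X_k$ is empty away from a controlled subvariety. The natural tool is the ``moving-hypersurfaces'' construction already implicit in the discussion preceding Corollary \ref{HCI}: because $X$ has codimension $c$, each of the $c$ defining hypersurfaces can be varied independently, and the resulting families of invariant ambient jet differentials pull back to a large family of sections of $E_{k,m}\Omega_X$. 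The same Schur polynomial identities that yield Theorem \ref{numpos} should allow one to verify, by dimension count, that $V$ separates jets along the generic stratum of $X_k$.

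Next I would verify ampleness via Nakai–Moishezon on the tower. Since ampleness of $E_{k,m}\Omega_X$ is equivalent, up to a standard twist, to ampleness of the tautological line bundle on $X_k$, it suffices to establish positivity of $c_1\bigl(\mathcal{O}_{X_k}(1) \otimes \pi_k^* \mathcal{O}_X(-a)\bigr)^{\dim Z} \cdot Z$ for every irreducible subvariety $Z \subset X_k$. Theorem \ref{numpos} delivers the top-dimensional case and, more generally, positivity of the Fulton–Lazarsfeld universal polynomials in the Chern classes, which should cover every $Z$ not lying in a specific exceptional locus. For the remaining subvarieties I would run a descending induction on $\dim Z$, using the sections constructed in the first step to certify positivity on each $Z$ that meets the basepoint-free locus.

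The main obstacle, as in all problems of this type, is the control of the base locus of the sections along the exceptional divisors of the tower $X_k \to X_{k-1}$ and along the singular strata where the jet bundle fails to be locally free in a ``generic'' way. Neither the existence of sections produced by Theorem \ref{thmjet} nor the numerical positivity of Theorem \ref{numpos} directly sees these loci, and it is precisely there that ampleness could, a priori, fail. Overcoming this presumably requires a quantitative refinement of the moving-hypersurfaces construction producing sections with prescribed vanishing orders along each exceptional component, and it is this refinement — not the already-established Theorems \ref{thmjet} and \ref{numpos} — that I expect to be the true difficulty of the conjecture.
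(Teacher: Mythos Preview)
The statement you are addressing is a \emph{Conjecture}, not a theorem: the paper does not prove it and offers no proof for you to be compared against. It is quoted from \cite{D-T10} purely as motivation, and the paper's contributions (Theorems \ref{thmjet} and \ref{numpos}, Corollary \ref{HCI}) are presented as partial evidence for it, not as steps in a proof. So there is nothing in the paper to match your proposal to.

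As for the proposal itself, treated as a program toward the open conjecture, there are concrete gaps. First, Theorem \ref{numpos} is only stated and proved for $\Omega_X(-a)$ under the hypothesis $c\geqslant n$, i.e.\ for $\kappa=1$; it says nothing about the Chern or Segre classes one meets on $X_k$ for $k\geqslant 2$, so it cannot feed a Nakai--Moishezon argument on the tower in the way you describe. Second, the paper itself reminds the reader (via the example $\mathcal{O}_{\mathbb{P}^1}(2)\oplus\mathcal{O}_{\mathbb{P}^1}(-1)$) that numerical positivity does not imply ampleness; your plan to ``cover every $Z$ not lying in a specific exceptional locus'' by Fulton--Lazarsfeld positivity is therefore not a reduction but the whole difficulty. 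Third, the ``moving hypersurfaces'' argument behind Corollary \ref{HCI} moves the \emph{degeneracy locus} of entire curves by intersecting translates of hypersurfaces; it does not produce jet differentials with controlled base locus on a fixed $X$, so it cannot be used as you suggest to manufacture a basepoint-free subspace $V\subset H^0(X,E_{k,m}\Omega_X\otimes\mathcal{O}_X(-a))$. Finally, ampleness of $E_{k,m}\Omega_X$ is not literally equivalent to ampleness of $\mathcal{O}_{X_k}(1)$: the latter is never ample because it is trivial along the fibres of the effective divisors $D_j$ (this is exactly why one works with relatively nef combinations $\mathcal{O}_{X_k}(a_1,\dots,a_k)$ in the proof of Theorem \ref{thmjet}), so the Nakai--Moishezon criterion would have to be applied to a more delicate object than the one you name.
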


In the last section we consider another side of the positivity of the cotangent bundle. That is, given an embedding $X\subseteq \mathbb{P}^N$ we know that $\Omega_X\otimes \mathcal{O}_X(2)$ is nef (since it is globally generated) but not ample in general. Therefore one can ask under which conditions we get ampleness. This has a really neat geometric interpretation, more precisely ampleness is equivalent to the nonexistence of lines in $X$. This is the content of Proposition $\ref{droite}$.

 Concerning our notations, we will work over $\mathbb{C}$, the field of complex numbers. If $E\to X$ is a vector bundle over some variety $X$, $\pi_E: \mathbb{P}(E)\rightarrow X$ will denote the projective bundle of one dimensional quotients of $E$ and $\mathcal{O}_{\mathbb{P}(E)}(1)$ the tautological quotient line bundle on $\mathbb{P}(E)$.


\section{Preliminaries}

\subsection{Segre Classes and Higher order jet spaces}
We start by giving the definition of the Segre classes associated to a vector bundle.
If $E$ is a rank $r$ complex vector bundle on $X$ and $p:\mathbb{P}(E)\to X$ the projection, then the Segre classes of $E$ are defined by $$s_i(E):=p_*c_1(\mathcal{O}_{\mathbb{P}(E)}(1))^{r-1+i}.$$ 
(Note that it is denoted $s_i(E^*)$ in $\cite{Ful98}$). It is straightforward to check that for any line bundle $L\to X$
\begin{equation}\label{segredroite}
s_i(E\otimes L)= \sum_{j=0}^{i} \binom{r-1+i}{i-j}s_j(E)c_1(L)^{i-j}.
\end{equation} 
 
Recall that the total Segre class is the formal inverse of the total Chern class of the dual bundle, $s(E)=c(E^*)^{-1}$. Therefore total Segre classes satisfy Whitney's formula for vector bundle exact sequences.

Now we briefly recall the construction of higher order jet spaces , details can be found in \cite{Dem00} and \cite{Mou10}, we will follow the presentation of $\cite{Mou10}$. Let $X\subset \mathbb{P}^N$ be a projective variety of dimension $n$, for all $k\in \mathbb{N} $ we can construct a variety $X_k$ and a rank $n$ vector bundle $\mathcal{F}_k$ on $X_k$. Inductively,  $X_0:=X$ and $\mathcal{F}_0:=\Omega_X$. Let $k\geqslant 0$ suppose that $X_k$ and $\mathcal{F}_k$ are constructed, then  $X_{k+1}:=\mathbb{P}(\mathcal{F}_{k})\overset{\pi_{k,k+1}}{\longrightarrow} X_k$ and $\mathcal{F}_{k+1}$ is the quotient of $\Omega_{X_{k+1}}$ defined by the following diagram.\\

$\begin{CD}
  @.            0                @.                 0          @.                                 @.          \\
@.            @VVV                                @VVV                          @.                          @.\\
  @.         S_{k+1}             @=              S_{k+1}       @.                                 @.          \\   
@.            @VVV                                @VVV                          @.                          @.\\ 
0 @>>>\pi^*_{k,k+1}\Omega_{X_k} @>>>     \Omega_{X_{k+1}}       @>>>       \Omega_{X_{k+1}/X_{k}}   @>>>    0\\  
@.            @VVV                                @VVV                          @|                          @.\\ 
0 @>>>\mathcal{O}_{X_{k+1}}(1)  @>>>    \mathcal{F}_{k+1}      @>>>       \Omega_{X_{k+1}/X_{k}}   @>>>     0 \\
@.            @VVV                                @VVV                          @.                          @.\\
  @.           0                @.                 0            @.                                 @.         \\ \\
\end{CD}$

For all $k > j \geqslant 0$ we will denote $\pi_{j,k}=\pi_{j,j+1}\circ \cdots \circ \pi_{k-1,k}: X_k\to X_j$, $\pi_k:=\pi_{0,k}$ and $E_{k,m}\Omega_X={\pi_k}_*\mathcal{O}_{X_k}(m)$. The bundles $E_{k,m}\Omega_X$ have important applications to hyperbolicity problems. 

Note that $n_k:=\dim(X_k)=n+k(n-1)$. 
If we have a k-uple of integers $(a_1,\cdots ,a_k)$ we will write 
$$\mathcal{O}_{X_k}(a_1,\cdots, a_k)=\pi_{1,k}^{*}\mathcal{O}_{X_1}(a_1)\otimes \cdots \otimes \mathcal{O}_{X_k}(a_k)$$


We define $s_{k,i}:=s_i({\mathcal{F}_k})$, $s_i:=s_i(\Omega_X)$, $u_k:=c_1(\mathcal{O}_{X_k}(1))$, $h_{\mathbb{P}^N}:=c_1(\mathcal{O}_{\mathbb{P}^N}(1))$, $h:={h_{\mathbb{P}^N}}_{|X}$ and $\mathcal{C}_k(X)=\mathbb{Z}\cdot u_k \oplus \cdots \oplus \mathbb{Z}\cdot u_1 \oplus \mathbb{Z}\cdot h \subset NS^1(X_k)$.
To ease our computations we will also adopt the following abuses of notations, if $k>j$ we will write $u_j$ the class on $X_k$ instead of $\pi^*_{j,k}u_j$ and similarly $s_{j,i}$ instead of $\pi^*_{j,k}s_{j,i}$. This should not lead to any confusion.

Now the horizontals exact sequences in the diagram, Whitney's formula and formula $\ref{segredroite}$ one easily derives (as in $\cite{Mou10}$)
the recursion formula
\begin{equation}\label{formulasegre}
s_{k,\ell}=\sum_{j=0}^{\ell}M_{\ell,j}^ns_{k-1,j}u_k^{\ell-j},
\end{equation}
where $M_{\ell,j}^n=\sum_{i=0}^{\ell-j}(-1)^i\binom{n-2+i+j}{i}$, in particular $M_{\ell,\ell}^n=1$.

\begin{lemma}\label{labase}
Let $k\geqslant 0$, $a\geqslant 0$, $\ell\geqslant 0$, take $\ell$ positive integers $i_1,\cdots, i_\ell$ and $m$ divisor classes $\gamma_1,\cdots, \gamma_m \in \mathcal{C}_k(X)$ such that $i_1+\cdots +i_\ell+m+a=n_k$. Denote $\gamma_q:=\alpha_{q,0}h+\sum_{i} \alpha_{q,i} u_i$. Then

$$ \int_{X_k}s_{k,i_1}\cdots s_{k,i_\ell} \gamma_1\cdots \gamma_m h^a =
\sum_{j_1,\cdots,j_{k+\ell},b} Q_{j_1,\cdots,j_{k+\ell},b}\int_X s_{j_1}\cdots s_{j_{k+\ell}} h^{a+b}
$$
Where in each term of the sum we have, $b\geqslant 0$ and the $Q_{j_1,\cdots,j_{k+\ell},b}$'s are polynomials in the $\alpha_{q,i}$'s whose coefficients are independant of $X$. Moreover up to reordering of the $j_p$'s one has $j_1\leqslant i_1,\cdots j_\ell\leqslant i_\ell$. 
\end{lemma}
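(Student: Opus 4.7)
The plan is to induct on $k$, with the case $k=0$ being essentially tautological: since $\mathcal{C}_0(X) = \mathbb{Z}\cdot h$, each $\gamma_q$ is $\alpha_{q,0}h$, and the integral rewrites as $\bigl(\prod_q \alpha_{q,0}\bigr)\int_X s_{i_1}\cdots s_{i_\ell}\,h^{a+m}$, which has the required form with $b=m$.

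For the inductive step I would combine two tools: the recursion $(\ref{formulasegre})$ and the projection formula along $\pi_{k-1,k}:X_k\to X_{k-1}$. The key input from the projective bundle structure $X_k = \mathbb{P}(\mathcal{F}_{k-1})$ is the pushforward identity $(\pi_{k-1,k})_*\, u_k^{n-1+i} = s_{k-1,i}$ (with vanishing for smaller exponents), coming from $\mathrm{rank}(\mathcal{F}_{k-1}) = n$. The procedure is then as follows. First, split each divisor class as $\gamma_q = \alpha_{q,k}u_k + \gamma_q'$, where $\gamma_q'$ is a pullback of a class in $\mathcal{C}_{k-1}(X)$. Simultaneously, expand $s_{k,i_p} = \sum_{j_p=0}^{i_p} M_{i_p,j_p}^n\,s_{k-1,j_p}\,u_k^{i_p-j_p}$ using the recursion. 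After multiplying out and regrouping by powers of $u_k$, the integrand becomes a polynomial (in the $\alpha_{q,i}$'s, with coefficients built from the $M^n_{\cdot,\cdot}$'s) in terms of the form $u_k^P\cdot\pi_{k-1,k}^*(\cdots)$. Applying the projection formula rewrites each such term as an integral on $X_{k-1}$ of exactly the shape of the lemma, with one extra Segre factor $s_{k-1,P-n+1}$ appended and at most $m$ remaining classes in $\mathcal{C}_{k-1}(X)$. Invoking the inductive hypothesis produces the claimed $(k-1)+(\ell+1) = k+\ell$ Segre classes of $\Omega_X$.

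The only delicate point, and the one I expect to require care, is bookkeeping on the index ordering. The recursion guarantees $j_p \leq i_p$ at level $k-1$ for the $\ell$ indices inherited from the original Segre classes, while the pushforward produces an additional unconstrained Segre factor that should be listed last. Transitivity under the inductive hypothesis then preserves $j_p \leq i_p$ for the first $\ell$ positions at the final stage on $X$. The dimension bookkeeping is automatic from the identity $n_k - (n-1) = n_{k-1}$, and the fact that all coefficients depend only on $n,k,a,\ell$ and the $i_p$'s (not on $X$ itself) is immediate from the purely combinatorial nature of the $M^n_{\cdot,\cdot}$'s.
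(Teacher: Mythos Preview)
Your proposal is correct and follows essentially the same approach as the paper: induction on $k$, with the inductive step combining the recursion formula~(\ref{formulasegre}) for $s_{k,i_p}$ with the projection formula $(\pi_{k-1,k})_* u_k^{n-1+i} = s_{k-1,i}$ to descend to $X_{k-1}$ with one additional Segre factor. The only cosmetic difference is that the paper fully expands each $\gamma_q$ into monomials $u_{k}^{p_k}\cdots u_1^{p_1}h^b$ before pushing down, whereas you keep the lower part $\gamma_q'\in\mathcal{C}_{k-1}(X)$ bundled; both variants feed directly into the induction hypothesis in the same way.
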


\begin{proof}
This is an immediate induction on $k$. The result is clear for $k=0$. Now suppose it is true for some $k>0$ and take $m$ divisors, $\gamma_1,\cdots, \gamma_m \in \mathcal{C}_{k+1}(X)$  on $X_{k+1}$. Let $\gamma_q:=\alpha_{q,0}h+\sum_{i} \alpha_{q,i} u_i$. Then using recursion formula \ref{formulasegre} and expanding, we get
\begin{eqnarray}
\int_{X_{k+1}}\!\!\!\!\!\!\!s_{k+1,i_1}\cdots s_{k+1,i_\ell}\gamma_1\cdots \gamma_m h^a\!\!\!\!\!&=&\!\!\!\!\! \sum P_{j_i,\cdots, j_\ell,b}^{p_1,\cdots, p_{k+1}}\!\!\!\int_{X_{k+1}}\!\!\!\!\!\!\!s_{k,j_1}\cdots s_{k,j_\ell}u_{k+1}^{p_{k+1}}\cdots u_1^{p_1}h^{a+b} \nonumber \\
&=&\!\!\!\!\! \sum P_{j_i,\cdots, j_\ell,b}^{p_1,\cdots, p_{k+1}}\!\!\!\int_{X_{k}}\!\!\!\!\!s_{k,j_1}\cdots s_{k,j_\ell} s_{k,r} u_{k}^{p_{k}}\cdots u_1^{p_1}h^{a+b} \nonumber
\end{eqnarray}
where in each term of the sum, $r=p_{k+1}-(n-1)$ and moreover, thanks to formula $\ref{formulasegre}$ one has $j_1\leqslant i_1,\cdots, j_\ell\leqslant i_\ell$. Note also that the $P_{j_i,\cdots, j_\ell,b}^{p_1,\cdots, p_{k+1}}$'s are polynomials in the $\alpha_{i,j}$'s but their coefficients do not depend on $X$. Now we can conclude by induction hypothesis.
\end{proof}


\subsection{Numerical positivity}
Following Fulton $\cite{Ful98}$ we recall definitions concerning Schur polynomials.
Let $c_1, c_2, c_3,...$ be a sequence of formal variables.
Let $\ell$ be a positive integer and $\lambda=(\lambda_1,...,\lambda_{\ell})$ be a partition of ${\ell}$. We define the Schur polynomial associated to $c=(c_i)_{{i\in \mathbb{N}}}$ and $\lambda$ to be:
\begin{eqnarray}
\Delta_{\lambda}(c):= \det \left[(c_{\lambda_i+j-i})_{1\leqslant i,j\leqslant {\ell}}\right]\nonumber
\end{eqnarray}
For exemple, $\Delta_{1}(c)=c_1$, $\Delta_{2,0}(c)=c_2$ and $\Delta_{(1,1)}(c)=c_1^2-c_2$.

Now consider two sequences of formal variables, $c_1, c_2, c_3,...$ and $s_1, s_2, s_3,...$ satisfying the relation:
\begin{eqnarray}
(1+c_1 t + c_2 t^2+ ...)\cdot(1-s_1 t +s_2 t^2 - ...)=1 \label{chernsegre}
\end{eqnarray}

Note that relation (\ref{chernsegre}) is satisfied when $c_i=c_i(E)$ are the Chern classes of a vector bundle $E$ over a variety $X$ and $s_i=s_i(E)$ are its Segre classes.

The proof of the following crucial combinatorial result can be found in $\cite{Ful98}$.

\begin{lemma}\label{lemmecombi}
With the same notations.
Let $\bar{\lambda}$ be the conjugate partition of $\lambda$, then $\Delta_{\lambda}(c)=\Delta_{\bar{\lambda}}(s)$.
\end{lemma}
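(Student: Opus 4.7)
The plan is to prove the identity by recognizing both sides as two equivalent determinantal expressions for the same Schur symmetric function, via the two versions of the Jacobi--Trudi formula.

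First I would reduce to a purely combinatorial identity of symmetric functions. Introduce formal variables $x_1,x_2,\ldots$ and set $e_i:=e_i(x)$ (the elementary symmetric polynomials) and $h_i:=h_i(x)$ (the complete homogeneous symmetric polynomials). Then one has $\prod_j(1+x_jt)=\sum_{i\geqslant 0}e_it^i$ and $\prod_j(1+x_jt)^{-1}=\sum_{i\geqslant 0}(-1)^ih_it^i$, so that
$$\Bigl(\sum_{i\geqslant 0}e_it^i\Bigr)\Bigl(\sum_{i\geqslant 0}(-1)^ih_it^i\Bigr)=1.$$
This is exactly relation (\ref{chernsegre}) after the substitution $c_i\leftrightarrow e_i$ and $s_i\leftrightarrow h_i$. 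Since (\ref{chernsegre}) polynomially determines each $s_i$ from $c_1,\ldots,c_i$ (and vice versa), and the $c_i$'s are algebraically independent as formal variables, it suffices to establish the equality $\Delta_\lambda(e)=\Delta_{\bar\lambda}(h)$ in this universal symmetric-function setting.

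Next I would invoke the two Jacobi--Trudi formulas. For any partition $\mu$, the Schur symmetric function $\sigma_\mu$ admits both expressions
$$\sigma_\mu=\det(h_{\mu_i-i+j})\qquad\text{and}\qquad\sigma_\mu=\det(e_{\bar\mu_i-i+j}).$$
Applying these with $\mu=\bar\lambda$ and using that conjugation is an involution ($\bar{\bar\lambda}=\lambda$) gives
$$\Delta_{\bar\lambda}(h)=\det(h_{\bar\lambda_i-i+j})=\sigma_{\bar\lambda}=\det(e_{\lambda_i-i+j})=\Delta_\lambda(e),$$
which is the claimed identity after translating back through $c_i=e_i$ and $s_i=h_i$.

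The only genuine obstacle in a fully self-contained treatment is proving the Jacobi--Trudi identities themselves. The cleanest route is the Lindström--Gessel--Viennot lemma applied to non-intersecting lattice paths, or alternatively the $\omega$-involution on the ring of symmetric functions which exchanges $e_i\leftrightarrow h_i$ and $\sigma_\mu\leftrightarrow\sigma_{\bar\mu}$, reducing one Jacobi--Trudi formula to the other. These are classical results, entirely independent of the algebraic-geometric context of the paper, and as the author indicates can be imported directly from \cite{Ful98}.
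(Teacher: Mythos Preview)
Your argument is correct: the reduction to the universal symmetric-function ring via the algebraic independence of the $c_i$'s is sound, and the two Jacobi--Trudi formulas then give the identity immediately. One small remark: you should state explicitly that the size of the determinant in the definition of $\Delta_\lambda$ can be taken large enough (at least $\ell(\lambda)$ and $\ell(\bar\lambda)$ respectively), since appending trailing zeroes to the partition only adds a lower-triangular block with $1$'s on the diagonal; otherwise the two determinants being compared have different formats.

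As for the comparison: the paper does not actually give a proof of this lemma. It only records the statement and refers to \cite{Ful98} for the argument. Your write-up is essentially the classical proof one would extract from Fulton (or from Macdonald), so there is no methodological divergence to discuss---you have simply unpacked what the paper left as a citation.
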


Let $E$ be a vector bundle of rank $r$  over a projective variety $X$ of dimension~$n$. 

\begin{e-definition}
We will say that $E$ is numerically positive if for any subvariety $Y \subseteq X $ and for any partition $\lambda $ of $ {\ell}=dim(Y) $ one has $\int_{Y}\Delta_{\lambda}(c(E))>0$.
\end{e-definition}

This definition is motivated by a theorem of Fulton and Lazarsfeld $\cite{F-L83}$ which gives numerical consequences of ampleness.
\begin{theorem}
If $E$ is ample then $E$ is numerically positive. Moreover the Schur polynomials are exactly the relevant polynomials to test ampleness numerically.
\end{theorem}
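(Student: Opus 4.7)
The plan is to split the theorem into its two assertions: first, that ampleness implies numerical positivity, and second, that nonnegative linear combinations of Schur polynomials exhaust the cone of numerically positive polynomials. For the first assertion I would reduce by restriction to the case $Y = X$, since ampleness passes to subvarieties and Schur polynomials commute with pullback, and further to partitions $\lambda$ of $n := \dim X$ of length at most $r := \text{rk}(E)$; outside this range $\Delta_\lambda(c(E))$ vanishes tautologically.

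The core geometric idea is to realize $\Delta_\lambda(c(E))$ as a proper pushforward of a monomial in ample first Chern classes from a flag bundle. Form the partial flag bundle $\pi : \text{Fl}(E) \to X$ parametrizing quotient flags of $E$ with jumps prescribed by the conjugate partition $\bar\lambda$. On $\text{Fl}(E)$ one obtains a chain of line bundles $L_1,\ldots,L_s$ arising as successive line-bundle quotients in the tautological filtration, and these $L_i$ are ample whenever $E$ is (a standard filtration argument, combined if necessary with the Bloch--Gieseker covering trick to handle the subquotient nature of the $L_i$). The Kempf--Laksov determinantal formula then expresses
$$\Delta_\lambda(c(E)) = \pi_*\bigl( c_1(L_1)^{a_1}\cdots c_1(L_s)^{a_s}\bigr)$$
for an explicit monomial of top relative degree, whence
$$\int_X \Delta_\lambda(c(E)) = \int_{\text{Fl}(E)} c_1(L_1)^{a_1}\cdots c_1(L_s)^{a_s} > 0$$
by the Nakai--Moishezon criterion applied to a top self-intersection of ample divisor classes.

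For the second assertion I would expand any weighted homogeneous polynomial $P$ of degree $\ell$ in the Schur basis, $P = \sum_{\lambda \vdash \ell} a_\lambda \Delta_\lambda$ (using Lemma \ref{lemmecombi} this expansion is well defined). To force each coefficient $a_\mu \geq 0$, plug in a tailored test pair $(X_\mu, E_\mu)$, typically a product of Grassmannians with its universal quotient bundle suitably twisted by a very ample line bundle so as to remain ample, for which Schubert calculus yields $\int_{X_\mu}\Delta_\lambda(c(E_\mu)) = \delta_{\lambda\mu}$ up to a positive scalar. The numerical positivity hypothesis applied to these test pairs then isolates $a_\mu$ as nonnegative, showing that the cone of numerically positive polynomials is exactly the one spanned by the $\Delta_\lambda$.

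The main obstacle is the geometric step in the first part: the Kempf--Laksov pushforward formula and, crucially, establishing that the successive quotient line bundles $L_i$ on the flag bundle are ample when $E$ is. Once these two inputs are available, Nakai--Moishezon delivers strict positivity; everything else---the reduction to $Y = X$, the change of basis to Schur polynomials, and the Schubert-calculus orthogonality on the test varieties---is standard combinatorics.
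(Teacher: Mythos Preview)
The paper does not prove this theorem at all: it is quoted as a result of Fulton and Lazarsfeld \cite{F-L83}, and the paper only uses the statement. So there is no ``paper's own proof'' to compare against; your sketch is really a proposed outline of the original Fulton--Lazarsfeld argument.

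That said, your outline contains a genuine gap in the first part. The claim that the successive line-bundle quotients $L_i$ on the flag bundle are ample whenever $E$ is ample is false. Already for $E=\mathcal{O}_{\mathbb{P}^1}(1)^{\oplus 2}$ one has $\mathbb{P}(E)\cong\mathbb{P}^1\times\mathbb{P}^1$ with tautological sequence $0\to S\to\pi^*E\to\mathcal{O}_{\mathbb{P}(E)}(1)\to 0$, and a Chern-class computation gives $S\cong\mathcal{O}(1,-1)$, which is not ample (not even nef). The Bloch--Gieseker trick lets you extract roots of line bundles after finite covers, but it does not turn the subquotients $L_i$ into ample line bundles; the obstruction is intrinsic, not a divisibility issue. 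Consequently the step ``Nakai--Moishezon applied to $c_1(L_1)^{a_1}\cdots c_1(L_s)^{a_s}$'' does not go through as written; your argument only survives for the single-step case $\mathbb{P}(E)$, which recovers positivity of the top Segre class $s_n(E)=\Delta_{(1,\ldots,1)}(c(E))$ but not of general $\Delta_\lambda$.

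Fulton and Lazarsfeld's actual proof proceeds differently: they realize $\Delta_\lambda(c(E))\cap[Y]$ via cone classes for the ample bundle $E$ and show these are represented by effective (and, under a dominance condition, nonzero) cycles. If you want a flag-bundle style argument, the correct input is not ampleness of each $L_i$ but rather positivity of suitable Schur-type combinations on the flag variety, which is a much more delicate statement and essentially equivalent to what you are trying to prove. Your outline of the second assertion (testing on products of Grassmannians via Schubert calculus to extract each coefficient $a_\mu$) is the standard route and is fine.
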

Note that the converse is false, for example the bundle $\mathcal{O}_{\mathbb{P}^1}(2)\oplus \mathcal{O}_{\mathbb{P}^1}(-1)$ over $\mathbb{P}^1$ is numerically positive but not ample (the problem being the lack of subvarieties of dimension two to test $c_2$). See $\cite{Ful76}$ for a more interesting example.


\subsection{Segre classes for complete intersections}\label{segre}

From now on, we will take $X=H_1\cap\cdots \cap H_c\subset \mathbb{P}^N$ a complete intersection of dimension $n$ and $H_i=(\sigma_i=0)$ with $\sigma_i \in H^0(\mathbb{P}^N,\mathcal{O}_{\mathbb{P}^N}(d_i))$ and $d_i \in \mathbb{N}$. Note that $n+c=N$. Let $\kappa:= \lceil\frac{n}{c}\rceil$ and take $b$ such that $n=(\kappa -1)c+b$, observe that $0<b\leqslant c$. To simplify some of our formulas, we will also define $\hat{\imath}:=i+n-1$ so that ${\pi_{k-1,k}}_*u_k^{\hat{\imath}}=s_{k-1,i}$.
Moreover as we will be interested in the asymptotic behavior of polynomials in $\mathbb{Z}[d_1,\cdots , d_c]$ we need some more notations. Let $P\in\mathbb{Z}[d_1,\cdots , d_c]$, $\deg{P}$ denotes the degree of the polynomial and $P^{dom}$ the homogenous part of $P$ of degreee $\deg P$ . We will write $P=o(d^k)$ if $\deg{P}<k$ and if $Q\in \mathbb{Z}[d_1,\cdots , d_c]$ is another polynomial we will write $P\sim Q$ if $P^{dom}=Q^{dom}$ and $P\gtrsim Q$ if $P^{dom}\geqslant Q^{dom}$.  Moreover some of our computations will take place in the chow ring $A^*(X)$ so we introduce some more notations. If $P$ is a polynomial in $\mathbb{Z}[d_1,\cdots , d_c, h]$ homogenous of degree $k$ in $h$ we will write $\tilde{P}$ for the unique polynomial in $\mathbb{Z}[d_1,\cdots , d_c]$ satisfying $P=\tilde{P}h^k$.
 
First we compute the Segre classes of the twisted cotangent bundle of a complete intersection in $\mathbb{P}^N$. Let $m\in \mathbb{Z}$. The twisted Euler exact sequence
\begin{eqnarray}
0 \to \mathcal{O}_{\mathbb{P}^N}(-m) \to \mathcal{O}_{\mathbb{P}^N}^{\oplus N+1}(1-m) \to T\mathbb{P}^N(-m) \to 0 \nonumber
\end{eqnarray}
yields:
\begin{eqnarray}
c(T\mathbb{P}^N(-m))=\frac{c(\mathcal{O}_{\mathbb{P}^N}^{\oplus N+1}(1-m))}{c(\mathcal{O}_{\mathbb{P}^N}(-m))}
=\frac{(1+(1-m)h_{\mathbb{P}^N})^{N+1}}{(1-mh_{\mathbb{P}^N})} \nonumber
\end{eqnarray}
The (twisted) normal bundle exact sequence 

\begin{eqnarray}
0 \to TX(-m) \to T\mathbb{P}^N_{|X}(-m) \to \bigoplus_{i=1}^c \mathcal{O}_{X}(d_i-m)\to 0 \nonumber
\end{eqnarray}
yields 
\begin{eqnarray}
c(TX(-m))=\frac{c(T\mathbb{P}^N_{|X}(-m))}{c(\bigoplus_{i=1}^c \mathcal{O}_{X}(d_i-m))}
=\frac{(1+(1-m)h)^{N+1}}{(1-mh)\prod_{i=1}^c(1+(d_i-m)h)} \nonumber
\end{eqnarray}
Therefore:
\begin{eqnarray}
s(\Omega_X(m))=(1\!-\!(1\!-\!m)h\!+\!(1\!-\!m)^2h^2\!-...)^{n+1}(1\!-\!mh)\prod_{i=1}^c(1\!+\!(d_i\!-\!m)h) \nonumber
\end{eqnarray}

Expanding the right hand side as a polynomial in $\mathbb{Z}[d_1,\cdots,d_c,h]$ we see that for $\ell \geq c$ we have $\deg(\tilde{s_\ell})=c$ and that for ${\ell}\leqslant c$ we have
\begin{eqnarray}\label{dominant}
s_{\ell}^{dom}(\Omega_X(m)) =\sum_{j_1<...<j_{\ell}} d_{j_1}...d_{j_{\ell}}h^{\ell}= c_{\ell}^{dom}(\bigoplus_{i=1}^c \mathcal{O}_{X}(d_i-m))  = c_{\ell}(\bigoplus_{i=1}^c \mathcal{O}_{X}(d_i)).
\end{eqnarray}

In particular, for $\ell \leqslant c$, we have $\deg(\tilde{s_\ell})=\ell$ and $\tilde{s_\ell}^{dom}>0$.
\begin{remark}
If $n\leqslant c$ we have that equality \ref{dominant} holds for all ${\ell}\in \mathbb{Z}$ (the case ${\ell}>n$ is obvious since both side of the equality vanish by a dimension argument).
\end{remark}

With this we can give some estimates for some intersection products on $X$.
\begin{lemma}\label{intersection}
\begin{enumerate}
\item Take $0\leqslant i_1\leqslant \cdots \leqslant i_k $, $\ell>0$ such that $i_1+\cdots +i_k+\ell=n$ then
$$\deg\left( \int_X s_{i_1}\cdots s_{i_k}h^{\ell} \right)<N$$ \label{intersection1}
\item Take $0\leqslant i_1\leqslant \cdots \leqslant i_k $ such that $i_1+\cdots +i_k=n$ then
 $\int_{X}s_{i_1}\cdots s_{i_k}$ is of degre $N$ if and only if $i_k\leqslant c$.

\item Take $0\leqslant i_1 \leqslant \cdots \leqslant i_{\kappa}$.
If $i_1< b$ or if $i_1 =b$ and $i_j< c$ for some $j>1$ then  $$\deg \left(\int_{X}s_{i_1}\cdots s_{i_{\kappa}}\right)<N.$$

\end{enumerate}
\end{lemma}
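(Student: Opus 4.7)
The plan is to reduce all three statements to a single degree count on Segre monomials. From the formula $s(\Omega_X) = (1+h)^{-(n+1)}\prod_{i=1}^c(1+d_i h)$ derived in the excerpt, we read off $s_i = \tilde s_i\, h^i$ with $\deg(\tilde s_i) = \min(i,c)$ for $0\leqslant i \leqslant n$, and moreover $\tilde s_i^{\,dom} = e_i(d_1,\ldots,d_c)$ when $i \leqslant c$, a polynomial with strictly positive coefficients. Since $\int_X h^n = d_1\cdots d_c$ contributes degree $c$, any intersection number $\int_X s_{i_1}\cdots s_{i_k} h^\ell$ with $\sum i_j + \ell = n$ is a polynomial in $d_1,\ldots,d_c$ of degree at most $c + \sum_j \min(i_j,c)$; when the dimensional balance fails, the integral simply vanishes.

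For (1), the hypothesis $\ell > 0$ forces $\sum_j \min(i_j,c) \leqslant \sum_j i_j = n - \ell$, so the upper bound reads $\leqslant N - \ell < N$. For (2) we have $\ell = 0$, so the bound becomes $c + \sum_j \min(i_j,c) \leqslant N$, with equality exactly when $\min(i_j,c) = i_j$ for every $j$, i.e.\ when $i_k \leqslant c$. In that case the dominant part of the product $\tilde s_{i_1}\cdots\tilde s_{i_k}$ is a product of elementary symmetric polynomials in the $d_i$'s, which has positive coefficients and therefore cannot cancel, so the degree really equals $N$; when $i_k > c$ the bound is strict and the degree drops.

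For (3), with $k = \kappa$ and $\sum i_j = n = (\kappa-1)c + b$, the goal is to show that under either hypothesis some index $i_j$ must exceed $c$, whereupon the estimate above yields degree $< N$. If $i_1 < b$ and every $i_j \leqslant c$, then $\sum i_j < b + (\kappa-1)c = n$, contradicting $\sum i_j = n$. If $i_1 = b$, some $i_{j_0} < c$ with $j_0 > 1$, and every $i_j \leqslant c$, then $\sum_{j\geqslant 2} i_j < (\kappa-1)c$, giving the same contradiction. In both cases some $i_j > c$, hence $\sum_j \min(i_j,c) < \sum_j i_j = n$, and $c + \sum_j \min(i_j,c) < N$ as required. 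There is no serious obstacle beyond bookkeeping of degrees; the only point that needs a dedicated argument is the sharpness direction of (2), which relies on the positivity of $\tilde s_i^{\,dom}$ for $i\leqslant c$.
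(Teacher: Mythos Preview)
Your proof is correct and follows essentially the same approach as the paper: both reduce everything to the degree count $\deg(\tilde s_i)=\min(i,c)$ together with $\deg\int_X h^n=c$, then read off (1), (2), (3) by comparing $c+\sum_j\min(i_j,c)$ with $N=n+c$. You are a bit more explicit than the paper in two places---you spell out the positivity of $\tilde s_i^{\,dom}=e_i(d_1,\dots,d_c)$ needed for the sharpness direction of (2), and you unpack the contrapositive argument for (3) that the paper leaves as ``an easy consequence of the second one''---but the underlying argument is identical.
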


\begin{proof}
First note that for $0\leqslant {\ell} \leqslant n$, since $\deg \int_X h^n=c $,  
$$\deg\left( \int_X s_{i_1}\cdots s_{i_k}h^{\ell} \right)=\deg{\tilde{s}_{i_1}}+\cdots +\deg{\tilde{s}_{i_k}} +c.$$
Recall also that $\deg(\tilde{s_{i}})=i$ if $i\leqslant c$ and $\deg(\tilde{s_{i}})=c$ if $i > c$.
Now the first point is clear, let ${\ell} >0$
\begin{eqnarray}
\deg\left( \int_X s_{i_1}\cdots s_{i_k}h^{\ell} \right)&\leqslant & i_1 +\cdots + i_k +c <n+c=N \nonumber
\end{eqnarray}
To see the second point, if $i_{k}\leqslant c$ so are all the $i_j$ and therefore $$\deg \left(\int_{X} s_{i_1}\cdots s_{i_{k}}\right)=i_1 + \cdots +i_{k} +c = N.$$ Conversely if $i_{k}>c$ then $$\deg \left(\int_{X}s_{i_1}\cdots s_{i_{k}}\right)=i_1 + \cdots +i_{k-1}+c +c <N.$$
The last point is an easy consequence of the second one thanks to the equality $n=(\kappa -1)c +b$.
\end{proof}
\section{Jet differentials on complete intersection varieties}

In \cite{Div08} Diverio proves Theorem \ref{vanishing} which is a result concerning the nonexistence of global jet differentials on complete intersections. Here we will prove that this theorem is optimal. Recall that $\kappa$ denotes the ratio~$\lceil \frac{n}{c} \rceil$.

\begin{theorem}\label{thmjet}
Fix $a\geqslant 0$. There exists a constant $\delta$ such that, if for all $i$, $d_i\geqslant \delta$ then $\mathcal{O}_{X_\kappa}(1)\otimes \pi_\kappa^* \mathcal{O}_X(-a)$ is big on $X_{\kappa}$. In particular
$$H^0(X,E_{\kappa,m}\Omega_X(-a))\neq 0$$
when $m\gg 0$.
\end{theorem}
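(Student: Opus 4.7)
The plan is to prove bigness of $\mathcal{L} := \mathcal{O}_{X_\kappa}(1) \otimes \pi_\kappa^* \mathcal{O}_X(-a)$ by the algebraic Morse inequalities: I will write $\mathcal{L} \equiv F - G$ as a difference of two nef line bundles on $X_\kappa$ and verify the top intersection inequality $F^{n_\kappa} > n_\kappa \, F^{n_\kappa-1}\cdot G$ for all $d_i$ large enough. Once this is established, bigness of $\mathcal{L}$ follows, and the cohomology statement then follows from ${\pi_\kappa}_*\mathcal{L}^{\otimes m} = E_{\kappa,m}\Omega_X \otimes \mathcal{O}_X(-am)$ and multiplication by a section of $\mathcal{O}_X(a(m-1))$ for $m \gg 0$.

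For the first step, I would appeal to Demailly-type weight estimates on the tower $X_\kappa \to X_{\kappa-1} \to \cdots \to X$ to fix positive integers $a_1,\ldots,a_{\kappa-1}$ (decreasing fast enough) and $D$ (depending only on $a$, $n$, $\kappa$) such that both
$$F := \mathcal{O}_{X_\kappa}(a_1,\ldots,a_{\kappa-1},1) \otimes \pi_\kappa^*\mathcal{O}_X(D)$$
and
$$G := \pi_{\kappa-1,\kappa}^*\!\bigl(\mathcal{O}_{X_{\kappa-1}}(a_1,\ldots,a_{\kappa-1}) \otimes \pi_{\kappa-1}^*\mathcal{O}_X(D+a)\bigr)$$
are nef on $X_\kappa$, with $c_1(F)-c_1(G) = u_\kappa - ah = c_1(\mathcal{L})$ by construction.

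The heart of the proof is the asymptotic computation of $F^{n_\kappa}$ and $F^{n_\kappa-1}\cdot G$ in the $d_i$'s. Writing $F = u_\kappa + E$ with $E = \sum_{i<\kappa}a_i u_i + Dh$ pulled back from $X_{\kappa-1}$, and $G = E + ah$, the projection formula ${\pi_{\kappa-1,\kappa}}_*u_\kappa^j = s_{\kappa-1,j-(n-1)}$ (vanishing for $j<n-1$) reduces the first pushdowns to
$${\pi_{\kappa-1,\kappa}}_* F^{n_\kappa} = \sum_{\ell\geq 0}\binom{n_\kappa}{\ell+n-1} s_{\kappa-1,\ell}\, E^{n_{\kappa-1}-\ell},$$
$${\pi_{\kappa-1,\kappa}}_*(F^{n_\kappa-1}G) = (E+ah)\sum_{\ell\geq 0}\binom{n_\kappa-1}{\ell+n-1} s_{\kappa-1,\ell}\, E^{n_{\kappa-1}-1-\ell}.$$
Iterating this pushdown $\kappa$ times and invoking the recursion \ref{formulasegre} together with Lemma \ref{labase}, both intersection numbers become explicit $\mathbb{Z}[d_1,\ldots,d_c]$-combinations of integrals $\int_X s_{j_1}\cdots s_{j_\kappa}\, h^b$ on $X$, with $b\geq 0$ and $\sum j_q + b = n$.

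Lemma \ref{intersection} then pins down the leading monomial in $(d_1,\ldots,d_c)$: the maximal $d$-degree $N = n+c$ is attained exactly on partitions of $n$ with all parts $\leq c$; since $n = (\kappa-1)c + b$, these must consist of $\kappa-1$ copies of $c$ and one part of size $b$, giving the strictly positive leading contribution $e_b(d_1,\ldots,d_c) \cdot (d_1\cdots d_c)^{\kappa-1}$ in $\int_X s_b\, s_c^{\kappa-1}$ by formula \ref{dominant}. The main obstacle is the combinatorial comparison of the coefficients of this leading monomial in $F^{n_\kappa}$ versus $n_\kappa\, F^{n_\kappa-1} G$: tracking the $\kappa$-fold pushdown, the contribution from $F^{n_\kappa}$ carries one more factor of $u_\kappa$ than $G$ can compensate (since $G$ is pulled back from $X_{\kappa-1}$), and the extra $ah$ inside $G = E + ah$ further lowers the Segre-class budget on $X$. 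Choosing $D$ large enough ensures the leading coefficient of $F^{n_\kappa} - n_\kappa\, F^{n_\kappa-1} G$ is strictly positive, so the full inequality persists for $d_i \geq \delta$ with $\delta$ depending only on $a$, $n$, $c$, and the fixed weights, and algebraic Morse concludes.
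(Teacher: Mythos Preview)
Your overall architecture (Morse inequalities, iterated pushdown, leading-order analysis via Lemmas~\ref{labase} and~\ref{intersection}) matches the paper's, but there is a genuine gap at the point you yourself flag as ``the main obstacle''.

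In your decomposition $\mathcal{L}=F-G$ you force $c_1(F)-c_1(G)=u_\kappa-ah$, so $G=E+ah$ with $E=\sum_{i<\kappa}a_iu_i+Dh$. Thus $G$ carries the classes $u_1,\dots,u_{\kappa-1}$, and consequently $F^{n_\kappa-1}\cdot G$ is \emph{not} $o(d^N)$: its leading (degree~$N$) part is the same kind of combination of $\int_X s_b s_c^{\kappa-1}$ as that of $F^{n_\kappa}$. Your remedy, ``choosing $D$ large enough'', cannot work: $D$ multiplies $h$, and by Lemma~\ref{intersection}\,(1) every monomial containing a factor of $h$ is $o(d^N)$, so $D$ simply does not appear in the degree-$N$ coefficient of either $F^{n_\kappa}$ or $F^{n_\kappa-1}G$. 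What you actually need is the \emph{sign} of the leading coefficient of
\[
n_\kappa\,u_\kappa\,(u_\kappa+E')^{n_\kappa-1}-(n_\kappa-1)(u_\kappa+E')^{n_\kappa}
=\sum_{j}(j-n_\kappa+1)\binom{n_\kappa}{j}u_\kappa^{\,j}E'^{\,n_\kappa-j},
\]
where $E'=\sum_{i<\kappa}a_iu_i$, and this alternating sum has negative terms for all $n-1\le j<n_\kappa-1$; you give no argument controlling it.

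The paper sidesteps this entirely. It does \emph{not} write $\mathcal{L}$ itself as $F-G$. Instead it first uses the effective divisors $D_k=\mathbb{P}(\Omega_{X_{k-1}/X_{k-2}})$ and the relation $\pi_{k-1,k}^*\mathcal{O}_{X_{k-1}}(1)=\mathcal{O}_{X_k}(1)\otimes\mathcal{O}_{X_k}(-D_k)$ to reduce bigness of $\mathcal{O}_{X_\kappa}(1)\otimes\pi_\kappa^*\mathcal{O}_X(-a)$ to bigness of some $\mathcal{O}_{X_\kappa}(a_1,\dots,a_\kappa)\otimes\pi_\kappa^*\mathcal{O}_X(-a)$ with freely chosen positive weights. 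This freedom lets one take $F=L_\kappa\otimes\cdots\otimes L_1$ and $G=\pi_\kappa^*\mathcal{O}_X(m+a)$ a \emph{pure multiple of $h$}. Then $F^{n_\kappa-1}G=o(d^N)$ for free by Lemma~\ref{technical}\,(1), and one only has to show $F^{n_\kappa}\gtrsim\int_X s_b s_c^{\kappa-1}$, which the paper gets by exploiting nefness of each $\ell_k$ to drop to the single monomial $\ell_\kappa^{\hat b}\ell_{\kappa-1}^{\hat c}\cdots\ell_1^{\hat c}$ and then iterating Lemma~\ref{technical}\,(3). Your proposal is missing precisely this reduction step, and without it the Morse comparison you set up is not established.
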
 

\begin{remark}
It will be clear during the proof that all computations could be made explicit, at least in small dimensions, so that the $\delta$ in the theorem is, in theory at least, effective. But we will not consider this problem here. 
\end{remark}

\begin{remark}
It is also natural to ask if under the hypothesis of the theorem, $\mathcal{O}_{X_k}(1)\otimes \pi_k^* \mathcal{O}_X(-a)$ is big if $k\geqslant \kappa$. In fact this is true and the proof is exactly the same.  
\end{remark}

As in $\cite{Div09}$ we introduce a nef line bundle $L_k\in \mathcal{C}_k(X)$ for all $k\geqslant 1$.

$$L_k:=\mathcal{O}_{X_k}(2\cdot 3^{k-2},\cdots , 2\cdot 3,2,1)\otimes\pi^*
\mathcal{O}_X(2\cdot 3^{k-1})$$
We can write its first chern class
$${\ell}_k:=c_1(L_k)=u_k +\beta_k$$
where $\beta_k$ is a class that comes from $X_{k-1}$.
Now we can state the main technical lemma. This is just the combination of
lemmas $\ref{labase}$ and $\ref{intersection}$.

\begin{lemma}\label{technical}
With the above notations we have the following estimates.
\begin{enumerate}
\item Take $k\geqslant 1$ and $\gamma_1,\cdots ,\gamma_{n_k-1} \in \mathcal{C}_{k}(X)$. Then 
$$\int_{X_k} \gamma_1 \cdots \gamma_m h=o(d^N).$$
\item Take $\gamma_1,\cdots ,\gamma_p \in \mathcal{C}_{k}(X)$ and $0\leqslant i_1 \leqslant \cdots \leqslant i_{q}$ such that $p+\sum i_j=n_k$. If $i_1< b$ or if $i_1 =b$ and $i_j< c$ for some $j>1$ then we have the following estimates, 
\begin{eqnarray}
\int_{X_k}s_{k,i_1}\cdots s_{k,i_q}\gamma_1\cdots \gamma_p&=&o(d^N)\label{estim1}\\
\int_{X_{k}}s_{k-1,i_1}\cdots s_{k-1,i_q}\gamma_1\cdots \gamma_p&=&o(d^N)\label{estim}
\end{eqnarray}
\item Take $0<k<\kappa$. Then
$$\int_{X_k} s_{k,b}s_{k,c}^{\kappa-k-1}{\ell}_{k}^{\hat{c}}\cdots {\ell}_1^{\hat{c}}= \int_{X_{k-1}} s_{k-1,b}s_{k-1,c}^{\kappa-k}{\ell}_{k-1}^{\hat{c}}\cdots {\ell}_1^{\hat{c}}+o(d^N).$$

\end{enumerate}
\end{lemma}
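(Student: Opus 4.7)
The unifying strategy is to push every integral down to $X$ via Lemma \ref{labase}, or one level at a time to $X_{k-1}$ via the projection formula for $\pi_{k-1,k}$, and then to read off the $d$-degree using Lemma \ref{intersection}. Part (1) is the quickest application: since the integrand already contains an explicit factor of $h$, I apply Lemma \ref{labase} with $\ell=0$ and $a=1$ to write $\int_{X_k}\gamma_1\cdots\gamma_{n_k-1}h$ as a finite sum of integrals of the form $\int_X s_{j_1}\cdots s_{j_k}h^{1+b}$ with $b\geq 0$. Each such integral has positive $h$-exponent, hence Lemma \ref{intersection}(1) gives $o(d^N)$ term by term.

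For part (2), I would start with (\ref{estim1}) and apply Lemma \ref{labase} with $\ell=q$ and $a=0$; this produces a sum of integrals $\int_X s_{j_1}\cdots s_{j_{k+q}}h^{b'}$ with $b'\geq 0$ and, after reordering, $j_p\leq i_p$ for $p=1,\dots,q$. The terms with $b'>0$ are handled by Lemma \ref{intersection}(1). For the terms with $b'=0$, the dimension identity $\sum j_p=n$ together with the constraints $j_p\leq i_p$ transfers the hypothesis on the smallest $i$ to the smallest of the $j_p$'s, and a short degree count in the spirit of Lemma \ref{intersection}(3) forces some $j_p>c$, whence Lemma \ref{intersection}(2) gives degree $<N$. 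Equation (\ref{estim}) is then reduced to (\ref{estim1}) using the projection formula: since the classes $s_{k-1,i_j}$ are pulled back from $X_{k-1}$, one pushes ${\pi_{k-1,k}}_*$ through the $\gamma$'s, converting each monomial $u_k^J\cdot(\text{rest})$ into $s_{k-1,J-n+1}\cdot(\text{rest})$; the resulting integral on $X_{k-1}$ falls under (\ref{estim1}) at level $k-1$, and one concludes by induction on $k$.

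For part (3), the plan is to push the integrand down one level, expanding $\ell_k^{\hat{c}}=(u_k+\beta_k)^{\hat{c}}$ binomially and each $s_{k,i}$ via the recursion (\ref{formulasegre}) as $\sum_j M_{i,j}^n s_{k-1,j}u_k^{i-j}$. Using ${\pi_{k-1,k}}_*(u_k^{\hat{c}})=s_{k-1,c}$ and the fact that $M_{i,i}^n=1$, the unique ``maximal'' contribution --- taking $u_k^{\hat{c}}$ from $\ell_k^{\hat{c}}$ and the $s_{k-1,b}$, $s_{k-1,c}$ pieces from the Segre expansions --- yields precisely $\int_{X_{k-1}}s_{k-1,b}s_{k-1,c}^{\kappa-k}\ell_{k-1}^{\hat{c}}\cdots\ell_1^{\hat{c}}$. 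Every other contribution is an error term that either picks up a factor of $\beta_k\in\mathcal{C}_{k-1}(X)$, or replaces some $s_{k-1,b}$ by $s_{k-1,b-i}$ with $i\geq 1$, or some $s_{k-1,c}$ by $s_{k-1,c-i}$ with $i\geq 1$, and should therefore be bounded by part (2) on $X_{k-1}$. The main obstacle, and the real content of the argument, is exactly this bookkeeping: one must verify that in every error term the smallest Segre index appearing is either strictly less than $b$, or equals $b$ with another index strictly less than $c$, so that the hypothesis of part (2) is genuinely satisfied and the main term is isolated.
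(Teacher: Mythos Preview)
Your proposal is correct and follows the paper's argument essentially step for step: part~(1) via Lemma~\ref{labase} plus Lemma~\ref{intersection}(1), part~(2) via Lemma~\ref{labase} plus Lemma~\ref{intersection}(1)--(2) for (\ref{estim1}) and then a single $\pi_{k-1,k}$-pushforward to reduce (\ref{estim}) to (\ref{estim1}) at level $k-1$ (no induction on $k$ is actually needed there, since (\ref{estim1}) has already been established for every level). The only organizational difference is in part~(3): the paper separates the computation into two passes --- first replacing $s_{k,b}s_{k,c}^{\kappa-k-1}$ by $s_{k-1,b}s_{k-1,c}^{\kappa-k-1}$ via the recursion (\ref{formulasegre}) and discarding the lower terms with (\ref{estim}), and only then expanding $\ell_k^{\hat c}=(u_k+\beta_k)^{\hat c}$ and pushing forward, discarding the $i<c$ pieces with (\ref{estim1}) --- whereas you expand everything at once; the paper's two-step version is exactly what makes the bookkeeping you flag as ``the main obstacle'' transparent, since in each pass only one index drops and the hypothesis of part~(2) is visibly satisfied.
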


\begin{proof}

1) is an immediate consequence of Lemma $\ref{intersection}.1$ and Lemma $\ref{labase}$.
Similarly for 2), thanks to Lemma $\ref{labase}$ we write
$$\int_{X_k}s_{k,i_1}\cdots s_{k,i_{q}}\gamma_1\cdots \gamma_p= \sum_{j_1,\cdots,j_{k+q}} Q_{j_1,\cdots,j_{k+q}}\int_X s_{j_1}\cdots s_{j_{k+q}} h^{a}$$

where $a\geqslant 0$ and moreover we know that in each term of this sum either $j_1<b$ or $j_p< c$ for some $p>0$, thus we can apply Lemma $\ref{intersection}.1$ (if $a>0$) or $\ref{intersection}.2$ (if $a=0$). From this one can easily deduce formula \ref{estim}, write $\gamma_i=a_iu_i+\beta_i$ where $\beta_i\in \mathcal{C}_{k-1}(X)$. 
\begin{eqnarray}
\int_{X_{k}}s_{k-1,i_1}\cdots \!\!\!\! &s_{k-1,i_q}&\!\!\!\!\gamma_1\cdots \gamma_p = \int_{X_{k}}s_{k-1,i_1}\cdots s_{k-1,i_q}(a_1u_1+\beta_1)\cdots (a_1u_1+\beta_p)\nonumber \\
&=&\!\!\!\!\!\!\!\!\!\! \sum_{I\subseteq{\{1,\cdots,p \}}}{\left( \prod_{i\in I}{a_i} \right)}\int_{X_{k}}s_{k-1,i_1}\cdots s_{k-1,i_q}u_k^{|I|}\prod_{i\notin I}{\beta_i}\nonumber \\
&=&\!\!\!\!\!\!\!\!\!\! \sum_{I\subseteq{\{1,\cdots,p \}}}{\left( \prod_{i\in I}{a_i} \right)}\int_{X_{k-1}}\!\!\!\!\!s_{k-1,i_1}\cdots s_{k-1,i_q}\cdot s_{k-1,|I|-n+1}\prod_{i\notin I}{\beta_i}\nonumber
\end{eqnarray}
Then we conclude by applying formula $\ref{estim1}$.

To see 3), 
\begin{eqnarray}
\int_{X_k} \!\!\!s_{k,b}s_{k,c}^{\kappa - k -1} {\ell}_{k}^{\hat{c}}\cdots {\ell}_{1}^{\hat{c}}\!\!\!\!&=&\!\!\!\! \int_{X_k}\!\! \left(\sum_{i=0}^{b}M_{b,i}^{n} s_{k-1,i}u_{k}^{b-i} \right)\!\! \left( \sum_{i=0}^{c}M_{c,i}^{n} s_{k-1,i}u_{k}^{c-i} \right)^{\kappa-k-1}\!\!\!\!\!\!\!\!\!\!\!\!\!\!\!\!\! {\ell}_{k}^{\hat{c}}\cdots {\ell}_{1}^{\hat{c}}\nonumber \\ 
                 &=& \!\!\!\! \int_{X_k}  s_{k-1,b}s_{k-1,c}^{\kappa - k -1} {\ell}_{k}^{\hat{c}}\cdots {\ell}_{1}^{\hat{c}}+o(d^N)\nonumber
\end{eqnarray}
By expanding and using formula \ref{estim} in each term of the obtained sum. Now,
\begin{eqnarray}
\int_{X_k}  s_{k-1,b}s_{k-1,c}^{\kappa - k -1} {\ell}_{k}^{\hat{c}}\cdots {\ell}_{1}^{\hat{c}}+o(d^N) \!\!\!\!&=&\!\!\!\! \int_{X_k} \!\!\!\! s_{k-1,b}s_{k-1,c}^{\kappa - k -1} (u_{k}+\beta_k)^{\hat{c}}{\ell}_{k-1}^{\hat{c}}\cdots {\ell}_{1}^{\hat{c}}\nonumber \\
                 &=&\!\!\!\! \int_{X_k} \!\!\!\! s_{k-1,b}s_{k-1,c}^{\kappa - k -1}  \sum_{i=0}^{\hat{c}}\binom{\hat{c}}{i}u_k^{\hat{c}-i}\beta_k^i {\ell}_{k-1}^{\hat{c}}\cdots {\ell}_{1}^{\hat{c}}\nonumber \\
                 &=&\!\!\!\! \int_{X_{k-1}} \!\!\!\!\!\!\! s_{k-1,b}s_{k-1,c}^{\kappa - k -1}  \sum_{i=0}^{c}\binom{\hat{c}}{i}s_{k-1,i}\beta_k^i {\ell}_{k-1}^{\hat{c}}\cdots {\ell}_{1}^{\hat{c}}\nonumber \\
                 &=&\!\!\!\! \int_{X_{k-1}}  \!\!\!\!\!\!\!s_{k-1,b}s_{k-1,c}^{\kappa - k -1} s_{k-1,c}{\ell}_{k-1}^{\hat{c}}\cdots {\ell}_{1}^{\hat{c}} +o(d^N)\nonumber \\
                 &=&\!\!\!\! \int_{X_{k-1}}  \!\!\!\!\!\!\!s_{k-1,b}s_{k-1,c}^{\kappa - (k-1) -1} {\ell}_{k-1}^{\hat{c}}\cdots {\ell}_{1}^{\hat{c}} +o(d^N)\nonumber
\end{eqnarray}

\end{proof}
Recall also the following consequence of holomorphic Morse inequalities (see $\cite{Laz04}$).

\begin{theorem}
Let $Y$ be a smooth projective variety of dimension $n$ and let $F$ and $G$ be two nef divisors on $Y$. If $F^n>nG\cdot F^{n-1}$ then $F-G$ is big.
\end{theorem}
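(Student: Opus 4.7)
The plan is to apply the algebraic form of Demailly's holomorphic Morse inequalities, as presented in $\cite{Laz04}$, and combine it with the numerical characterization of bigness. The key input I will quote is the following asymptotic lower bound: for any pair of nef divisors $F$ and $G$ on a smooth projective variety $Y$ of dimension $n$,
$$h^0\bigl(Y, m(F-G)\bigr) \;\geq\; \frac{m^n}{n!}\bigl(F^n - n\,F^{n-1}\cdot G\bigr) - o(m^n)$$
as $m\to\infty$. This is precisely the case $q=1$ of Demailly's inequalities, combined with the trivial bound $h^0 \geq h^0 - h^1$; it is stated in $\cite{Laz04}$ in essentially this form.

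The first step is simply to apply this estimate to our two nef divisors $F$ and $G$. The hypothesis $F^n > n\,G\cdot F^{n-1}$ is exactly the statement that the leading coefficient on the right hand side is strictly positive, so $h^0(Y,m(F-G))\geq c\,m^n$ for some $c>0$ and all $m$ large enough; in other words the space of sections grows at the maximal polynomial rate.

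The second step is to invoke the numerical characterization of bigness: a line bundle $L$ on a smooth projective $n$-fold $Y$ is big if and only if $h^0(Y,mL)$ grows like $m^n$. Applying this criterion to $L=F-G$ immediately concludes the proof.

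The main obstacle is of course the Morse inequality itself, which I am using as a black box. In Demailly's original analytic formulation its proof requires curvature estimates combined with an asymptotic spectral analysis of the $\bar\partial$-Laplacian on high tensor powers of a smoothly metrized line bundle representing $F-G$. The purely algebraic derivation available in $\cite{Laz04}$ proceeds instead by Bertini and induction on the dimension, controlling the intermediate cohomology groups via Fujita-style vanishing. Either way, all the genuine work lies on that side; the deduction I have outlined above is very short and formal.
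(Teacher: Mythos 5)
Your proposal is correct and coincides with the paper's treatment: the paper states this result without proof, quoting it as a known consequence of the (algebraic) holomorphic Morse inequalities from \cite{Laz04}, which is exactly the black box you invoke. The deduction you spell out --- the $q=1$ Morse inequality giving $h^0(Y,m(F-G))\geq \frac{m^n}{n!}(F^n-nF^{n-1}\cdot G)-o(m^n)$, followed by the characterization of bigness via maximal growth of sections --- is the standard one and is sound.
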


We are now ready to prove Theorem $\ref{thmjet}$. 
\begin{proof}
First we recall an argument from $\cite{Div09}$ to show that we just have to check that $\mathcal{O}_{X_k}(a_1,\cdots, a_k)\otimes \pi_k^*\mathcal{O}_X(-a)$ is big for some suitable $a_i$'s. We know (see $\cite{Dem00}$) that $D_k:=\mathbb{P}(\Omega_{X_{k-1}/X_{k-2}})\subset X_k$ is an effective divisor that satisfies the relation $\pi_{k-1,k}^*\mathcal{O}_{X_{k-1}}(1)=\mathcal{O}_{X_k}(1)\otimes \mathcal{O}_{X_k}(-D_k)$. From this an immediate induction shows that for any $k>1$ and any $k$-uple $(a_1,\cdots,a_k)$ we have 
$$\mathcal{O}_{X_k}(b_{k+1})=\mathcal{O}_{X_k}(a_1,\cdots, a_k)\otimes\pi_{2,k}^*\mathcal{O}_{X_2}(b_1D_2)\otimes\cdots \otimes\mathcal{O}_{X_k}(b_{k-1}D_k).$$
Where for all $j>0$, $b_j:=a_1+\cdots +a_{j}$. Thus when $0\leqslant b_j$ for all $0\leqslant j\leqslant k$ then $\pi_{2,k}^*\mathcal{O}_{X_2}(b_1D_2)\otimes\cdots  \otimes\mathcal{O}_{X_k}(b_{k-1}D_k)$ is effective, this means that, under this condition, to prove that $\mathcal{O}_{X_k}(1)\otimes \pi_k^*\mathcal{O}_X(-a)$ is big it is sufficient to show that $\mathcal{O}_{X_k}(a_1,\cdots, a_k)\otimes \pi_k^*\mathcal{O}_X(-a)$ is big.

 Let $D=F-G$ where $F:=L_{\kappa} \otimes \cdots \otimes L_1$, and $G=\pi_{\kappa}^*(\mathcal{O}_X(m+a))$ where $m\geqslant 0$ is chosen so that $F\otimes\pi_{\kappa}\mathcal{O}_X(-m)$ has no component coming from $X$. It is therefore sufficent to show that $D$ is big. To do so, we will apply holomorphic Morse inequalities to $F$ and $G$ (both nef).
We need to prove that 
$$F^{n_{\kappa}}>n_{\kappa}F^{n_{\kappa}-1}\cdot G.$$
Clearly, the right hand side has degree strictly less than $N$ in the $d_i$'s thanks to Lemma $\ref{technical}$
and therefore we just have to show that the left hand side is larger than a positive polynomial of degree $N$ in the $d_i$'s. Let $\alpha := c_1(\pi_{\kappa}^*(\mathcal{O}_X(a))$

\begin{eqnarray}
F^{n_{\kappa}}&=& \int_{X_\kappa}({\ell}_{\kappa}+\cdots +{\ell}_1-\alpha)^{n_{\kappa}} \nonumber \\
				 &=& \int_{X_\kappa}\sum_{i=0}^{n_{\kappa}}(-1)^i\binom{n_{\kappa}}{i} ({\ell}_{\kappa}+\cdots +{\ell}_1)^{n_{\kappa}-i}\alpha^i \nonumber \\
                 &=& \int_{X_\kappa}({\ell}_{\kappa}+\cdots +{\ell}_1)^{n_{\kappa}}+o(d^N)\nonumber
\end{eqnarray}
by applying Lemma \ref{technical}. But since all the $\ell_i$'s are nef,
\begin{eqnarray}
\int_{X_\kappa}({\ell}_{\kappa}+\cdots +{\ell}_1)^{n_{\kappa}}&\geqslant & \int_{X_\kappa} {\ell}_{\kappa}^{\hat{b}} \cdot {\ell}_{\kappa-1}^{\hat{c}}\cdots {\ell}_1^{\hat{c}}\nonumber \\
                 &=& \int_{X_{\kappa -1}}\!\!\!s_{\kappa -1,b} \cdot {\ell}_{\kappa-1}^{\hat{c}}\cdots {\ell}_1^{\hat{c}}+o(d^N)\nonumber
\end{eqnarray}
The last inequality is obtained by using Lemma \ref{technical}.2. Now an immediate induction proves that for all $k<\kappa$ one has
$$F^{n_{\kappa}} \geqslant \int_{X_k} s_{k,b}s_{k,c}^{\kappa-k-1} l_{k}^{\hat{c}}\cdots l_{1}^{\hat{c}}+o(d^N).$$

We just proved the case $k = \kappa -1$ and the other part of the induction is exactly the content of Lemma $\ref{technical}.3$.
Therefore,
$$F^{n_{\kappa}} \geqslant \int_{X}  s_bs_c^{\kappa-1}+o(d^N)$$
and we conclude applying Lemma $\ref{intersection}.2$
\end{proof}

\section{Algebraic Degeneracy and Hyperbolicity for Complete intersections}

Here we show that from the work of \cite{DMR10} and \cite{D-T10} one can straightforwardly deduce the hyperbolicity of generic complete intersections of high codimension and of high multi degree. We start by a definition.

\begin{e-definition}
Let $X$ be a projective variety, we define the algebraic degeneracy locus to be the Zariski closure of the union of all nonconstant entire curves $f:\mathbb{C}\to X$ 
$$dl(X):= \overline{\bigcup{f(\mathbb{C})}}.$$
\end{e-definition}

Now recall the main result they prove.

\begin{theorem}\label{D-T}
There exists $\delta \in \mathbb{N}$ such that if $H$ is a generic hypersurface of degree $d\geq \delta$, then there exists a proper algebraic subset $Y \subset H$ of codimension at least two in $H$ such that  $dl(H)\subset Y$.
\end{theorem}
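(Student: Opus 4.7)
The plan is to combine the existence of jet differentials vanishing on an ample divisor (a result of the same flavor as Theorem \ref{nonvanishing}, applied to a generic hypersurface $H$ of dimension $n$) with the technique of global meromorphic vector fields on universal jet spaces introduced by Siu, developed by Paun and Rousseau, and refined in \cite{DMR10} and \cite{D-T10}.

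First I would invoke a nonvanishing theorem for jet differentials with negative twist: for $d$ sufficiently large there exist nonzero sections of $E_{n,m}\Omega_H \otimes \mathcal{O}_H(-a)$ for some $a \geq 1$ and $m \gg 0$. By the fundamental vanishing theorem of Green--Griffiths, Demailly and Siu--Yeung, any entire curve $f : \mathbb{C} \to H$ lifts to $f_{[n]} : \mathbb{C} \to H_n$ and satisfies $P \circ f_{[n]} \equiv 0$ for every such $P$, since $P$ vanishes along an ample divisor. This already forces $f(\mathbb{C})$ into the base locus $\Sigma \subset H$ of the sections of $E_{n,m}\Omega_H \otimes \mathcal{O}_H(-a)$, which is a proper algebraic subset.

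To upgrade the conclusion from "proper" to "codimension at least two", I would work on the universal family $\mathcal{H} \to U$ of hypersurfaces of degree $d$ over an open subset $U$ of the parameter space, together with its $n$-jet prolongation $\mathcal{H}_n \to U$. The section $P$ deforms to a relative jet differential $\tilde P$ on $\mathcal{H}_n$. One then constructs a large family of global meromorphic vector fields on $\mathcal{H}_n$, tangent to the fibers of $\mathcal{H}_n \to U$ modulo controlled poles, with pole order bounded by a fixed multiple of the hyperplane class pulled back from $\mathbb{P}^{n+1}$. Differentiating $\tilde P$ along such vector fields produces new jet differentials, each derivation shrinking the twist by the pole order but cutting the base locus by at least one further direction. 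Iterating and then restricting to a generic fiber $H$ produces a codimension-two subvariety $Y \subset H$ containing $dl(H)$, provided $d$ is large enough that the twist of $P$ can absorb the required number of pole contributions.

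The main obstacle, and what really distinguishes \cite{DMR10} from \cite{D-T10}, is the construction and counting of the slanted vector fields together with the sharp bookkeeping of their pole orders: one needs enough linearly independent fields at a generic point of $\mathcal{H}_n$ to cut the base locus down by two dimensions, while keeping the total pole contribution bounded by the available twist $a$ coming from Theorem \ref{nonvanishing}. The refinement to codimension two in \cite{D-T10} comes precisely from a more efficient construction of these fields (exploiting the product structure of $\mathcal{H}_n$ fiberwise over the hypersurface parameter space) and a tighter estimate on the admissible pole orders; this is the delicate technical heart of the argument, everything else being relatively formal once the fields are in hand.
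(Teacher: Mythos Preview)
The paper does not give a proof of this theorem. Theorem~\ref{D-T} is stated as a quoted result from \cite{DMR10} and \cite{D-T10} (introduced by ``Now recall the main result they prove'') and is then used as a black box: the paper's own contribution in that section is to deduce Corollaries~\ref{pseudogeneric} and~\ref{HCI} from it via the elementary moving Lemma~\ref{moving} together with Remarks~\ref{rkaction} and~\ref{cap}. There is therefore nothing in the paper itself to compare your argument against.

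For what it is worth, your outline is a fair high-level summary of the strategy actually carried out in \cite{DMR10} and \cite{D-T10}: existence of twisted invariant jet differentials on the generic hypersurface, the fundamental vanishing theorem forcing every entire curve into the base locus, and then Siu's slanted meromorphic vector fields on the universal jet space used to differentiate the section and cut the base locus further. Your last paragraph, however, slightly misattributes the mechanism of the codimension-two refinement: \cite{D-T10} does not build a new or larger supply of vector fields exploiting some product structure; the vector-field package is essentially that of \cite{DMR10}. The improvement comes instead from a finer analysis of what the derivatives of the initial section actually cut out, in particular controlling the extra effective divisor that appears when one differentiates and handling the contribution of vertical jets.
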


\begin{remark}\label{zariski}
In fact they prove something slightly stronger. Consider the universal hypersurface of degree $d$ in $\mathbb{P}^N$
$$\mathcal{H}_d=\left\{(x,t)\in \mathbb{P}^N \times T^d\;\;\; /\;\;\; x\in H_{d,t} \right\}$$
where $T^d:=\mathbb{P}(H^0(\mathbb{P}^N,\mathcal{O}_{\mathbb{P}^N}(d))^*)$ and $H_{d,t}=(t=0)$. Denote $\pi_d$ the projection on the second factor. 
During the proof of Theorem $\ref{D-T}$ it is shown that for $d\gg 0$ there exists a open subset $U_d\subset T^d$ and an algebraic subset $\mathcal{Y}_d\subset \mathcal{H}_{d|U_d}\subset U_d\times\mathbb{P}^N$ such that for all $t\in U_d$, the fibre $Y_{d,t}$ has codimension 2 in $H_{d,t}$ and $dl(H_{d,t})\subset Y_{d,t}$.
\end{remark}

\begin{remark}
It should also be noted that the $\delta$ in the theorem is effective. A first bound, $\delta\leq 2^{(N-1)^5}$, is computed in $\cite{DMR10}$ and the optimale bound, $\delta=N+2$, is obtained by J. Merker in $\cite{Mer10}$.   
\end{remark}

We are going to use the standard action of $G:= Gl_{N+1}(\mathbb{C})$ on $\mathbb{P}^N$. For any $g\in G$ and any variety $X\subseteq \mathbb{P}^N$ we write $g\cdot X:=g^{-1}(X)$.
\begin{remark}\label{rkaction} 
Let $g\in G$ and $X \subset \mathbb{P}^N$ a projective variety. If $f:\mathbb{C}\to g\cdot X$ is a non constant entire curve then $g\circ f :\mathbb{C}\to X$ is a non constant entire curve, therefore $g\circ f (\mathbb{C})\subseteq Y$ and thus $f(\mathbb{C})\subseteq g\cdot Y$. This proves that $g\cdot dl(X)= dl(g\cdot X)$
\end{remark}

\begin{remark}\label{cap}
Note also that if $X_1$ and $X_2$ are two projective varieties in $\mathbb{P}^N$, then $dl(X_1\cap X_2)\subseteq dl(X_1)\cap dl(X_2)$. 
\end{remark}
We will combine these remarks with the following moving lemma.

\begin{lemma}\label{moving}
Let $V\subset \mathbb{P}^N$ and $W\subset \mathbb{P}^N$ be algebraic subsets such that $\dim (V)= n$ and $\dim (W)= m$. Then for a generic $g\in G$, we have  $$\dim (g\cdot V \cap W)= \max\left\{n+m-N,0\right\}.$$
\end{lemma}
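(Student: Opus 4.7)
The statement is Kleiman's transversality (``moving'') theorem applied to the transitive action of $G = GL_{N+1}(\mathbb{C})$ on $\mathbb{P}^N$. The plan is to form the usual incidence variety and read off the generic fiber dimension of one projection by computing the total dimension via the other.

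First I would reduce to the case where $V$ and $W$ are both irreducible, by treating each pair of irreducible components separately and intersecting the finitely many resulting generic open subsets of $G$. Then I would introduce
\[
I := \{(g,x) \in G \times W \;:\; g(x) \in V\} \subset G \times \mathbb{P}^N,
\]
equipped with two projections: $\pi_1 : I \to G$, whose fiber over $g$ is canonically identified with $g\cdot V \cap W$ (via the second coordinate), and $\pi_2 : I \to V \times W$ defined by $\pi_2(g,x) = (g(x),x)$.

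The key observation is that since $G$ acts transitively on $\mathbb{P}^N$, the fiber
\[
\pi_2^{-1}(v,w) \;=\; \{g \in G \;:\; g(w)=v\}
\]
is, for every $(v,w) \in V \times W$, a nonempty coset of the (connected) parabolic $\mathrm{Stab}_G(w)$, hence irreducible of dimension $\dim G - N$. Therefore $\pi_2$ is surjective with equidimensional connected fibers over the irreducible base $V \times W$, so $I$ is irreducible of dimension
\[
\dim I \;=\; n + m + \dim G - N.
\]

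Now I would analyze $\pi_1$. Since $W$ is projective, $\pi_1$ is proper and its image $\{g \in G : g\cdot V \cap W \neq \emptyset\}$ is closed in $G$. If $n+m \geq N$, the projective dimension theorem on $\mathbb{P}^N$ (any two subvarieties whose dimensions sum to at least $N$ meet, and every component of the intersection has dimension at least $n+m-N$) forces this image to be all of $G$, so $\pi_1$ is surjective. Then the theorem on generic fiber dimension of a dominant morphism of irreducible varieties gives $\dim \pi_1^{-1}(g) = \dim I - \dim G = n+m-N$ for $g$ in a dense open of $G$; combined with the lower bound of $n+m-N$ on every component of every nonempty fiber (same projective dimension theorem, applied to $g\cdot V$ and $W$), this yields the required equality. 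If instead $n+m < N$, then $\dim I < \dim G$, so $\pi_1(I)$ is a proper closed subset of $G$ and the fiber over a generic $g$ is empty, which is compatible with the right-hand side $\max\{n+m-N,0\}$.

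No step is particularly delicate; the only point that genuinely requires an input beyond bookkeeping with dimensions is the surjectivity of $\pi_1$ when $n+m \geq N$, for which the projective dimension theorem on $\mathbb{P}^N$ is exactly what is needed.
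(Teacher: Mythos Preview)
Your argument is correct: it is precisely Kleiman's transversality theorem specialized to the transitive action of $GL_{N+1}(\mathbb{C})$ on $\mathbb{P}^N$, and each step (irreducibility of the incidence variety via the coset description of the fibers of $\pi_2$, upper semicontinuity/generic fiber dimension for $\pi_1$, and the projective dimension theorem to force surjectivity when $n+m\geqslant N$) is standard and sound.

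The paper, however, argues differently: it runs an induction on $m=\dim W$, choosing a hyperplane $H$ with $\dim(W\cap H)\leqslant m-1$, applying the induction hypothesis to $W\cap H$, and then deriving a contradiction from any hypothetical component $Z\subset g\cdot V\cap W$ of excessive dimension by intersecting $Z$ with $H$. Your incidence-variety approach is more conceptual and immediately generalizes to any homogeneous variety under a connected group; it also cleanly separates the two regimes $n+m\geqslant N$ and $n+m<N$. The paper's approach, by contrast, is completely elementary (no fiber-dimension theorem, no group-theoretic input beyond moving by $g$) and stays entirely inside projective geometry. One minor remark applying to both: in the regime $n+m<N$ the generic intersection is empty, so the equality with $\max\{n+m-N,0\}=0$ should be read with the convention that the upper bound is all that is being asserted; you flag this correctly.
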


\begin{proof}
We will proced by induction on $m$. The case $m=0$ is clear.
Now for the general case, let $H$ be a hyperplane of $\mathbb{P}^N$, such that $\dim(H\cap W)\leqslant m-1$ (such a hyperplan clearly exists). Now we take a generic $g\in G$, such that, by induction hypothesis, $\dim(g\cdot V \cap (W \cap H))= \max\left\{n+m-1-N,0\right\}$. Now argue by contradiction, obviously $\dim(g\cdot V \cap W)\geq \max\left\{n+m-N,0\right\}$. If $\dim(g\cdot V \cap W)> \max\left\{n+m-N,0\right\}$ then 
there exist an irreducible $Z\subset g\cdot V \cap W$ such that $\dim (Z)=\alpha>\max\left\{n+m-N,0\right\}$. But then $\dim (Z\cap H)\geqslant \alpha -1$, and since $Z\cap H \subset g\cdot V \cap (W\cap H)$, we get $\max\left\{n+m-N-1,0\right\} < \alpha-1 \leqslant \dim(Z\cap H) \leqslant \dim (g\cdot V \cap (W\cap H)) \leqslant \max\left\{n+m-1-N,0\right\}$, which yields the desired contradiction.
\end{proof}

\begin{corollary}\label{pseudogeneric}
Let $c\geqslant 1$, take $c$ hypersurfaces $H_1,\cdots ,H_c \subset \mathbb{P}^N$ satisfying the conclusions of Theorem $\ref{D-T}$ and take generic $g_2\cdots ,g_c\in G$. Consider $X:= H_1 \cap g_2\cdot H_2 \cap \cdots \cap g_c\cdot H_c$ (which is a smooth complete intersection). Then there is an algebraic subset $Y \subset X$ of dimension $N-3c$ in $X$, such that $dl(X)\subseteq Y $. In particular $X$ is hyperbolic if $c\geqslant \frac{N}{3}$. 
\end{corollary}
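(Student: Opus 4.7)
The plan is to combine Theorem \ref{D-T} with Remarks \ref{rkaction}, \ref{cap} and the moving lemma (Lemma \ref{moving}). First, applying Theorem \ref{D-T} to each $H_i$ yields an algebraic subset $Y_i \subseteq H_i$ of codimension at least two, hence of dimension at most $N-3$ in $\mathbb{P}^N$, containing $dl(H_i)$. By Remark \ref{rkaction}, $dl(g_i\cdot H_i)=g_i\cdot dl(H_i)\subseteq g_i\cdot Y_i$, and iterating Remark \ref{cap} gives
\[
dl(X)\subseteq \bigcap_{i=1}^c dl(g_i\cdot H_i)\subseteq Y:=Y_1\cap g_2\cdot Y_2\cap\cdots\cap g_c\cdot Y_c.
\]
Since each $g_i\cdot Y_i$ is contained in $g_i\cdot H_i$, the inclusion $Y\subseteq X$ is automatic.

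Next, I would bound $\dim Y$ by iterating Lemma \ref{moving}. Choosing $g_2,\dots,g_c$ successively in dense open subsets of $G$, the moving lemma applied to the (at most) $(N-3)$-dimensional set $Y_k$ against the previous partial intersection gives, by induction on $k$,
\[
\dim\bigl(Y_1\cap g_2\cdot Y_2\cap\cdots\cap g_k\cdot Y_k\bigr)\leq \max\{N-3k,\,0\},
\]
so that $\dim Y\leq \max\{N-3c,\,0\}$, which is the required estimate. (If some component of $Y_i$ has dimension strictly less than $N-3$, the resulting bound is only smaller, so we do not lose anything.)

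For the hyperbolicity assertion when $c\geq N/3$, the bound becomes $\dim Y=0$, so $Y$ is a finite set. Any entire curve $f:\mathbb{C}\to X$ satisfies $f(\mathbb{C})\subseteq dl(X)\subseteq Y$, and a nonconstant entire curve cannot have image in a finite set; hence $X$ carries no nonconstant entire curve and is hyperbolic.

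The main obstacle I anticipate is bookkeeping rather than mathematical depth: one must check that the finitely many genericity requirements on $g_2,\dots,g_c$—the dimension drop at each induction step, applied not just to $Y_k$ but to each of its irreducible components, plus a Bertini-type transversality condition ensuring that $X$ itself is a smooth complete intersection of codimension $c$—can all be satisfied simultaneously. Each is a dense open condition on $G$, so their intersection remains a dense open subset; this is routine to formalize but deserves to be recorded explicitly.
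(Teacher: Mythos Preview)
Your proposal is correct and follows essentially the same route as the paper: obtain the degeneracy loci $Y_i$ from Theorem~\ref{D-T}, use Remarks~\ref{rkaction} and~\ref{cap} to conclude $dl(X)\subseteq Y_1\cap g_2\cdot Y_2\cap\cdots\cap g_c\cdot Y_c$, and then apply the moving lemma iteratively to bound the dimension. The paper packages the same argument as an explicit induction on $c$, whereas you unwind it directly; your care about ``$\leq$'' versus ``$=$'' in the dimension count is in fact a slight improvement over the paper's phrasing.
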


\begin{proof}
This is a simple induction on $c$. The case $c=1$ is just the content of Theorem $\ref{D-T}$. Let $c>0$ and suppose that $X_{c-1}:= H_1 \cap g_2\cdot H_2 \cap \cdots \cap g_c\cdot H_{c-1}$ is a smooth complete intersection that contains a proper subset $Y_{c-1}$ of dimension $N-3c+3$ in $X$, such that $Y_{c-1}$ contains the image of all the entire curves in $X$. By hypothesis there is an algebraic subset $Y_c\subset X_c$ of dimension $N-3$, such that $dl(H_c)\subseteq Y_c$.  Now for generic $g_c\in G$, $X_{c-1}\cap g_c\cdot H_c$ is a smooth complete intersection and moreover on can apply Lemma $\ref{moving}$ to $Y_{c-1}$ and $Y_c$ and thus
 
$$\dim (g_c\cdot Y_c  \cap Y_{c-1})= N-3+N-2c+1-N= N-3c.$$
Now, Remarks \ref{rkaction} and \ref{cap} yield $dl(X)\subseteq Y$.
\end{proof}

\begin{corollary}\label{HCI}
Let $X=H_1\cap \cdots \cap H_c \subset \mathbb{P}^N$, $H_i \in H^0(\mathbb{P}^N,\mathcal{O}_{\mathbb{P}^N}(d_i))$ be a generic smooth complete intersection such that $d_i\geq \delta $. Then there is an algebraic subset $Y \subset X$ of dimension $N-3c$ in $X$, such that $dl(X)\subseteq Y $. In particular $X$ is hyperbolic if $c\geqslant \frac{N}{3}$. 
\end{corollary}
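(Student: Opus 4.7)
The approach is to upgrade Corollary \ref{pseudogeneric}, which handles the ``pseudo-generic'' intersections $H_1 \cap g_2\cdot H_2 \cap \cdots \cap g_c\cdot H_c$, to genuinely generic complete intersections by an openness argument on the parameter space $\mathcal{T} := T^{d_1} \times \cdots \times T^{d_c}$.

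First, for each $i$ with $d_i \geqslant \delta$, invoke Remark \ref{zariski} to obtain a Zariski-open $U_i \subset T^{d_i}$ and a universal family $\mathcal{Y}_i$ whose fibre $Y_{i,t}$ is a codimension-$2$ subvariety of $H_{d_i, t}$ containing $dl(H_{d_i, t})$. By Remark \ref{rkaction}, the existence of such a $Y$ is preserved under the $G$-action ($g\cdot Y_{i,t}$ works for $g\cdot H_{d_i,t}$), so we may enlarge $U_i$ and extend $\mathcal{Y}_i$ equivariantly to assume $U_i$ is $G$-invariant. Since $T^{d_i}$ is an irreducible projective space, this nonempty open set is automatically dense.

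Next, form the universal candidate degeneracy locus $\mathcal{Z} \to U_1 \times \cdots \times U_c$ whose fibre over $t = (t_1, \ldots, t_c)$ is $Z_t := Y_{1,t_1} \cap \cdots \cap Y_{c,t_c} \subseteq X_t := H_{d_1,t_1}\cap\cdots\cap H_{d_c,t_c}$. Iterating Remark \ref{cap} gives $dl(X_t) \subseteq Z_t$ for every $t$. Inside $U_1 \times \cdots \times U_c$, the subset $V$ on which $X_t$ is a smooth complete intersection of dimension $N-c$ and on which $\dim Z_t \leqslant N-3c$ is Zariski-open: the smoothness condition is classical, and the dimension bound on $Z_t$ is open by upper semi-continuity of fibre dimension for the projection $\mathcal{Z}\to U_1\times\cdots\times U_c$. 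Corollary \ref{pseudogeneric} asserts precisely that $V$ is nonempty: starting from any $H_i\in U_i$ and choosing $g_2,\dots,g_c\in G$ generically, the tuple $(H_1,\,g_2\cdot H_2,\dots,\,g_c\cdot H_c)$ lies in $U_1\times\cdots\times U_c$ by $G$-invariance, the associated $X_t$ is smooth, and the Lemma \ref{moving} computation already carried out in that corollary shows $\dim Z_t = N-3c$.

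Being nonempty open in the irreducible variety $\mathcal{T}$, the locus $V$ is open and dense, so a generic complete intersection lies in $V$; taking $Y$ to be the union of the top-dimensional components of $Z_t$ yields the desired algebraic subset of dimension at most $N-3c$ containing $dl(X)$, and when $3c\geqslant N$ this forces $dl(X)=\emptyset$, giving hyperbolicity. The main obstacle is the $G$-equivariant extension of Remark \ref{zariski}'s universal family, needed so that the pseudo-generic configuration of Corollary \ref{pseudogeneric} actually provides a point of $U_1\times\cdots\times U_c$ witnessing the correct dimension of $Z_t$; once this equivariance is granted, the rest reduces to routine semi-continuity on $\mathcal{T}$.
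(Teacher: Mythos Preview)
Your argument is correct and matches the paper's own proof: both form the universal intersection $\mathcal{Y}_{d_1}\cap\cdots\cap\mathcal{Y}_{d_c}$ over $U_{d_1}\times\cdots\times U_{d_c}$, invoke upper semi-continuity of the fibre dimension, and appeal to Corollary~\ref{pseudogeneric} to exhibit a point with fibre dimension at most $N-3c$. The $G$-equivariance issue you single out as the ``main obstacle'' is not addressed in the paper's terser version and is in any case harmless, since for fixed $H_i\in U_{d_i}$ the map $g\mapsto g\cdot H_i$ lands in the dense open $U_{d_i}$ for generic $g$, so no equivariant extension is actually needed.
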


\begin{proof}
With the notations of remark $\ref{zariski}$, consider $U=U_{d_1}\times \cdots \times U_{d_c}$, $\mathcal{Y}= \mathcal{Y}_{d_1}\cap \cdots \cap \mathcal{Y}_{d_1} $. It is well known that $\dim(Y_t)$ is an upper semi continuous function on $U$. Therefore we only need to show that there is no Zariski open subset in $U$ on which $\dim(Y_t)>N-3c$, but this is clear by corollary $\ref{pseudogeneric}$.
\end{proof}

\section{Numerical positivity of the cotangent bundle}
In $\cite{Deb05}$, O. Debarre proves that on an $N$ dimensional abelian variety, the intersection of at least $N/2$ generic hypersurfaces of high degree has ample cotangent bundle. He conjectures that the analoguous result holds in $\mathbb{P}^N$. We already proved some evidences toward this conjecture, with the notations of section \ref{segre}, Theorem \ref{thmjet} says in particular that if $c\geq n$ then $\mathcal{O}_{\mathbb{P}(\Omega_X)}(1)\otimes\pi_1^*\mathcal{O}_X(-a)$ is big (note that using Riemann-Roch Theorem, O. Debarre had already proved the bigness of $\mathcal{O}_{\mathbb{P}(\Omega_X)}(1)\otimes\pi_1^*\mathcal{O}_X(1)$). Moreover, as a particular case of Corollary \ref{HCI}, we know that if $X$ is also generic then $X$ is hyperbolic. As a last evidence toward this conjecture, we will prove the following.

\begin{theorem}\label{numpos}
With the same notations, take $a\in\mathbb{Z}$. If $c\geqslant n$, then there exists $D\in \mathbb{N}$ such that if $d_i > D$ for all $i$ then $\Omega_X(-a)$ is numericaly positive.
\end{theorem}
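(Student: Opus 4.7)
The plan is to reduce the positivity of $\int_Y \Delta_\lambda(c(\Omega_X(-a)))$ to a positivity statement about a classical Schur polynomial in the multidegrees $d_1, \ldots, d_c$, which is immediate. Let $Y \subseteq X$ be a subvariety of dimension $\ell$ (so $\ell \leqslant n \leqslant c$ by hypothesis), and let $\lambda$ be a partition of $\ell$. Applying Lemma \ref{lemmecombi} I rewrite $\Delta_\lambda(c(\Omega_X(-a))) = \Delta_{\bar\lambda}(s(\Omega_X(-a)))$, so it suffices to work with Segre classes.

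Next I use the computation from Section \ref{segre}: the Segre class $s(\Omega_X(-a))$ is a polynomial in $h$ alone, since the Euler sequence and the normal bundle sequence give $s(\Omega_X(-a))$ as a product of polynomials in $h$. Thus $s_i(\Omega_X(-a)) = \tilde{s}_i(d,a)\, h^i$ on $X$, and by formula \ref{dominant} one has $\tilde{s}_i^{dom} = e_i(d_1,\ldots,d_c)$ whenever $i \leqslant c$. Expanding the Jacobi--Trudi determinant $\Delta_{\bar\lambda}(s) = \det(s_{\bar\lambda_i + j - i})$, each monomial in the expansion is a product of $s_i$'s whose indices sum to $\ell$, so every such index lies in $\{0,1,\ldots,\ell\}$ and is therefore $\leqslant c$. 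Consequently one can write
\begin{equation*}
\Delta_\lambda(c(\Omega_X(-a))) = D_\lambda(d,a)\, h^\ell
\end{equation*}
for a polynomial $D_\lambda \in \mathbb{Z}[d_1,\ldots,d_c,a]$, with dominant part
\begin{equation*}
D_\lambda^{dom}(d) = \det\bigl(e_{\bar\lambda_i + j - i}(d_1,\ldots,d_c)\bigr) = s_\lambda(d_1,\ldots,d_c)
\end{equation*}
by the dual Jacobi--Trudi identity expressing Schur polynomials in elementary symmetric functions.

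Since $\lambda$ has at most $\ell \leqslant c$ parts, $s_\lambda(d_1,\ldots,d_c)$ is a genuine Schur polynomial in $c$ variables, which is a sum of monomials with positive integer coefficients (via the semistandard tableaux formula) and is therefore strictly positive whenever the $d_i$ are positive. Hence for $d_i$ sufficiently large, the lower order terms (in the $d_i$'s) of $D_\lambda$ are dominated and $D_\lambda(d,a) > 0$. Because the number of partitions $\lambda$ with $|\lambda| \leqslant n$ is finite, a single uniform threshold $D$ works simultaneously for all such $\lambda$.

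Finally, for any subvariety $Y \subseteq X$ of dimension $\ell$, the integral becomes
\begin{equation*}
\int_Y \Delta_\lambda(c(\Omega_X(-a))) = D_\lambda(d,a) \int_Y h^\ell = D_\lambda(d,a) \cdot \deg(Y),
\end{equation*}
and since $\deg(Y) > 0$ (as $Y \subseteq \mathbb{P}^N$ is an effective subvariety and $h$ is the hyperplane class), we obtain the desired positivity. The main technical points to verify are that every index appearing in the Schur determinant stays in the range $[0,c]$ (which follows from $\ell \leqslant c$), and that the dominant contribution survives the lower-order perturbations coming from the twist by $\mathcal{O}_X(-a)$ — neither of which poses a real obstacle once the structure $s_i(\Omega_X(-a)) = \tilde{s}_i h^i$ is in hand.
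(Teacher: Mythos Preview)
Your proof is correct and follows essentially the same route as the paper: pass to Segre classes via Lemma~\ref{lemmecombi}, use that each $s_i(\Omega_X(-a))$ is a multiple of $h^i$ so the intersection over $Y$ factors as a polynomial in the $d_j$'s times $\deg(Y)$, and then check that the dominant part of this polynomial is positive. The only difference is in the last step: the paper recognizes the dominant part as $\tilde{\Delta}_{\bar\lambda}\bigl(c(\bigoplus_j \mathcal{O}_X(d_j))\bigr)$ and invokes the Fulton--Lazarsfeld theorem for the ample bundle $\bigoplus_j \mathcal{O}_X(d_j)$, whereas you identify the same quantity directly as the Schur polynomial $s_\lambda(d_1,\ldots,d_c)$ via the dual Jacobi--Trudi identity and appeal to its manifest monomial positivity --- a slightly more self-contained conclusion that avoids citing the very theorem motivating the definition of numerical positivity.
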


\begin{proof}
By Lemma \ref{lemmecombi} we have to check that for any subvariety $Y\subseteq X$ of dimension $\ell\leqslant n\leqslant c$ and for any partition $\lambda$ of $\ell$ one has $\int_Y \Delta_{\bar{\lambda}}(s(\Omega_X(-a)))>0$. Moreover, $\int_Y \Delta_{\bar{\lambda}}(s(\Omega_X(-a)))= \tilde{\Delta}_{\bar{\lambda}}(s(\Omega_X(-a)))\int_Yh^{\ell}$, thus we just have to check that $\tilde{\Delta}_{\bar{\lambda}}(s(\Omega_X(-a)))>0$ when the $d_i$'s are large enough, which is equivalent to $\tilde{\Delta}^{dom}_{\bar{\lambda}}(s(\Omega_X(-a)))>0$. Now the equality 
\begin{eqnarray}
 \tilde{\Delta}_{\bar{\lambda}}^{dom}(s(\Omega_X(-a)))=\det (\tilde{s}_{\lambda_i+j-i}^{dom}(\Omega_X(-a)))_{1\leqslant i,j\leqslant \ell}\label{equa}
\end{eqnarray}
holds if one can prove that the right hand side is non zero.
But by (\ref{dominant}) we find :
\begin{eqnarray}
\det (\tilde{s}_{\lambda_i+j-i}^{dom}(\Omega_X(-a)))_{1\leqslant i,j\leqslant l}&=&\det \left(\tilde{c}_{\lambda_i+j-i}\left(\bigoplus_{j=1}^k\mathcal{O}(d_j)\right)\right)_{1\leqslant i,j\leqslant l}\nonumber \\
&=&\tilde{\Delta}_{\bar{\lambda}}\left(c\left(\bigoplus_{j=1}^k\mathcal{O}(d_j)\right)\right)\nonumber
\end{eqnarray}
By applying the theorem of Fulton and Lazarsfeld to $\bigoplus_{j=1}^k\mathcal{O}_X(d_j)$ (which is ample if $d_i>0$) we find that this is striclty positive. 
This yields equality in \ref{equa}, and we get the desired result.
\end{proof}

\section{Ampleness of $\Omega_X(2)$}
In this last section we give a geometric interpretation of the ampleness of $\Omega_X(2)$. Fix a $N+1$-dimensional complexe vector space $V$, denote $\mathbb{P}^N=\mathbb{P}(V^*)$ the projectivized space of lines in $V$, $p : V \to \mathbb{P}(V^*)$ the projection, and $Gr(2,V)=Gr(2,\mathbb{P}^N)$ the space of vector planes in $V$ which is also the space of lines in $\mathbb{P}^N$. We will also consider the projection $\pi : \mathbb{P}(\Omega_{\mathbb{P}^N})\to \mathbb{P}^N$. 
The key point is the following lemma.
 
\begin{lemma}\label{Han}
There is a map $\varphi: \mathbb{P}(\Omega_{\mathbb{P}^N})\to \mathbb{P}(\Lambda^2 V^*)$ such that $\varphi^* \mathcal{O}_{\mathbb{P}(\Lambda^2 V^*)}(1)=\mathcal{O}_{\Omega_{\mathbb{P}^N}}(1)\otimes \pi^*\mathcal{O}_{\mathbb{P}^N}(2)$. Moreover this application factors through the Pl\"{u}cker embedding $Gr(2,\mathbb{P}^N)=Gr(2,V)\hookrightarrow \mathbb{P}(\Lambda^2 V^*)$. More precisely, an element $(x,[\xi])\in \mathbb{P}(\Omega_{\mathbb{P}^N})$ with $x\in\mathbb{P}^N$ and $\xi \in T_x \mathbb{P}^N$, gets maped to the unique line $\Delta$ in $\mathbb{P}^N$ satisfying $\xi\in T_x\Delta \subseteq T_x\mathbb{P}^N$. 
\end{lemma}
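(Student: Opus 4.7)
The plan is to construct $\varphi$ via the universal property of Grothendieck projectivization: a morphism $Y \to \mathbb{P}(\Lambda^2 V^*)$ with $\varphi^*\mathcal{O}(1) = \mathcal{L}$ is the same thing as a surjection $\Lambda^2 V^* \otimes \mathcal{O}_Y \twoheadrightarrow \mathcal{L}$. So on $Y := \mathbb{P}(\Omega_{\mathbb{P}^N})$ the goal becomes producing a surjection
$$
\Lambda^2 V^* \otimes \mathcal{O}_Y \twoheadrightarrow \mathcal{O}_Y(1) \otimes \pi^*\mathcal{O}_{\mathbb{P}^N}(2),
$$
which then automatically yields both the morphism $\varphi$ and the stated pullback formula.

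To build this surjection, first work on $\mathbb{P}^N$. Combining the tautological inclusion $\mathcal{O}_{\mathbb{P}^N}(-1) \hookrightarrow V \otimes \mathcal{O}_{\mathbb{P}^N}$ with the quotient $V \otimes \mathcal{O}_{\mathbb{P}^N} \twoheadrightarrow T_{\mathbb{P}^N}(-1)$ coming from the Euler sequence, together with the wedge product $V \otimes V \to \Lambda^2 V$, one obtains a morphism
$$
\mathcal{O}_{\mathbb{P}^N}(-1) \otimes T_{\mathbb{P}^N}(-1) \;=\; T_{\mathbb{P}^N}(-2) \;\longrightarrow\; \Lambda^2 V \otimes \mathcal{O}_{\mathbb{P}^N},
$$
which at a point $x \leftrightarrow L \subset V$ is $L \otimes (V/L) \to \Lambda^2 V$, $l \otimes \bar v \mapsto l \wedge v$ (well defined since $l \wedge l = 0$). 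This is fibrewise injective, hence a subbundle inclusion; dualising gives a surjection $\Lambda^2 V^* \otimes \mathcal{O}_{\mathbb{P}^N} \twoheadrightarrow \Omega_{\mathbb{P}^N}(2)$. Pulling it back to $Y$ and postcomposing with the tautological quotient $\pi^*\Omega_{\mathbb{P}^N} \twoheadrightarrow \mathcal{O}_Y(1)$ (suitably twisted by $\pi^*\mathcal{O}_{\mathbb{P}^N}(2)$) produces the required surjection, and hence the morphism $\varphi$.

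It remains to identify $\varphi$ geometrically. At a point $(x, [\xi])$ with $x = [L]$ and $\xi \in T_x\mathbb{P}^N = \mathrm{Hom}(L, V/L)$, pick $l \in L \setminus \{0\}$ and a lift $v \in V$ of $\xi(l) \in V/L$. Tracing the constructed surjection at this point and dualising to read off the image in $\mathbb{P}(\Lambda^2 V^*)$, one finds that $\varphi(x, [\xi])$ corresponds to the line $\langle l \wedge v\rangle \subset \Lambda^2 V$. This is precisely $\Lambda^2 W$ for $W := \langle l, v\rangle \subset V$, so $\varphi$ factors through the Pl\"ucker embedding of $Gr(2,V)$, and the projective line $\Delta := \mathbb{P}(W^*) \subset \mathbb{P}^N$ satisfies $x \in \Delta$ and $T_x \Delta = \mathrm{Hom}(L, W/L) = \langle \xi \rangle$, as required. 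The main bookkeeping obstacle will be keeping Grothendieck's convention straight (so that $\mathcal{O}_Y(1)_{(x,[\xi])} \simeq \xi^*$ and $\mathcal{O}_{\mathbb{P}^N}(1)_x \simeq L^*$), which is what makes the fibre identity $(\Lambda^2 W)^* \simeq \xi^* \otimes (L^*)^{\otimes 2}$ match the desired pullback; once that is set up, everything else is formal manipulation of the Euler sequence.
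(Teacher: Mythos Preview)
Your proof is correct and follows the same overall strategy as the paper: both produce the surjection $\Lambda^2 V^* \otimes \mathcal{O}_{\mathbb{P}^N} \twoheadrightarrow \Omega_{\mathbb{P}^N}(2)$ from the Euler sequence and then invoke the universal property of $\mathbb{P}(-)$ to obtain $\varphi$ together with the pullback formula. The only difference is in how that surjection is built. You wedge the tautological sub with the Euler quotient directly, getting the fibrewise-injective map $T_{\mathbb{P}^N}(-2) = \mathcal{O}(-1)\otimes T_{\mathbb{P}^N}(-1) \hookrightarrow \Lambda^2 V \otimes \mathcal{O}$ (i.e.\ $L\otimes(V/L)\to\Lambda^2 V$, $l\otimes\bar v\mapsto l\wedge v$) and then dualise; the paper instead applies $\Lambda^{N-1}$ to the Euler sequence and uses the perfect-pairing identifications $\Lambda^{N-1}V\cong\Lambda^2 V^*$ and $\Lambda^{N-1}T_{\mathbb{P}^N}\cong\Omega_{\mathbb{P}^N}\otimes K_{\mathbb{P}^N}^{-1}$ to land on the same surjection. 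Your route is a bit more direct and makes the geometric identification with the Pl\"ucker point $[l\wedge v]$ immediate, while the paper has to backtrack through the top exterior powers to read off the image; in the end the map $\varphi$ obtained is the same.
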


\begin{proof} 
Take the Euler exact sequence 
$$0 \to \mathcal{O}_{\mathbb{P}^N} \to V\otimes \mathcal{O}_{\mathbb {P}^N}(1)\to T\mathbb{P}^N \to 0$$ 
and apply $\Lambda^{N-1}$ to it to get the quotient
$$\Lambda^{N-1}V\otimes\mathcal{O}_{\mathbb{P}^N}(N-1)\to \Lambda^{N-1}T \mathbb{P}^N \to 0.$$
Now using the well known dualities, $\Lambda^{N-1}V=\Lambda^2V^*$ and $\Lambda^{N-1}T \mathbb{P}^N=\Omega_{\mathbb{P}^N}\otimes K_{\mathbb{P}^N}^*=\Omega_{\mathbb{P}^N}\otimes\mathcal{O}_{\mathbb{P}^N}(N+1)$, and tensoring everything by $\mathcal{O}_{\mathbb{P}^N}(1-N)$ we get 
$$\Lambda^2V^*\to \Omega_{\mathbb{P}^N}\otimes \mathcal{O}_{\mathbb{P}^N}(2)\to 0.$$
This yields the map $\varphi : {\mathbb{P}}(\Omega_{\mathbb{P}^N})=\mathbb{P}(\Omega_{\mathbb{P}^N}(2))\hookrightarrow {\mathbb{P}^N}\times \mathbb{P}(\Lambda^2V^*)\to \mathbb{P}(\Lambda^2V^*)$ such that $\varphi^*\mathcal{O}_{\mathbb{P}(\Lambda^2 V^2)}(1)=\mathcal{O}_{\Omega_{\mathbb{P}^N}(2)}(1)=\mathcal{O}_{\Omega_{\mathbb{P}^N}}(1)\otimes \pi^*\mathcal{O}_{\mathbb{P}^N}(2)$.

To see the geometric interpretation of this map, it suffices to backtrack through the previous maps. Take a point $x\in \mathbb{P}^N$ and a vector $0 \neq \xi \in T_x \mathbb{P}^N$ and fix a basis $(\xi_0,\cdots, \xi_{N-1})$ of $T_x\mathbb{P}^N$ such that $\xi_0 = \xi$. Now take $v\in V$ such that $p(v)=x$ and a basis $(e_0,\cdots, e_N)$ of $T_vV= V$ such that $d_vp(e_N)=0$ and $d_vp(e_i)=\xi_i$ for $i<N$. We just have to check that $(x,\left[\xi\right])$ is maped to the annouced line $\Delta$ which, with our notations, corresponds to the point $\left[e_0\wedge e_N\right]\in \mathbb{P}(\Lambda^2V^*)$.\\ 

$\begin{array}{ccccccc}
\mathbb{P}(\Omega_{\mathbb{P}^N,x}) &\to& \mathbb{P}(\Lambda^{N-1}T_x\mathbb{P}^N) &\to& \mathbb{P}(\Lambda^{N-1}V) &\to& \mathbb{P}(\Lambda^2V^*)\\
\left[ \xi_0 \right]  &\mapsto& \left[ \xi_1^*\wedge \cdots \wedge \xi_{N-1}^* \right] &\mapsto& \left[ e_1^*\wedge \cdots \wedge e_{N-1}^* \right] &\mapsto& \left[ e_0\wedge e_N \right]
\end{array}$

\end{proof}

With this we can prove our proposition,
\begin{e-proposition}\label{droite}
Let $X\subseteq \mathbb{P}^N$ be a smooth variety then $\Omega_X(2)$ is ample if and only if $X$ doesn't contain any line.
\end{e-proposition} 

\begin{proof}
By Lemma~$\ref{Han}$, we know that $\Omega_X(2)$ is ample if and only if the restriction of $\varphi$, $\varphi_{X}: \mathbb{P}(\Omega_X)\subseteq \mathbb{P}(\Omega_{\mathbb{P}^N})\to Gr(2,\mathbb{P}^N)$ is finite.

Now, if $X$ contains a line $\Delta$ then $\varphi_X$ is not finite since the curve $\mathbb{P}(K_{\Delta})~\subseteq~\mathbb{P}(\Omega_X)$ gets maped to the point in $Gr(2,\mathbb{P}^N)$ representing $\Delta$.

If $\varphi_{X}$ is not finite then there is a curve $C\subseteq \mathbb{P}(\Omega_X)$ which gets maped to a point in $Gr(2,\mathbb{P}^N)$ corresponding to a line $\Delta$ in $\mathbb{P}^N$. Let $\Gamma=\pi(C)$, Lemma~$\ref{Han}$ tells us that the embedded tangent space $\mathbb{T}_x\Gamma$ equals $\Delta$ for all $x\in \Gamma$ and therefore  $\Delta\subseteq X$.   
\end{proof}
Recall also that in~\cite{Cle86}, H. Clemens Proves that a generic hypersurface $H\subset \mathbb{P}^N$ of degree greater than $2N-1$ contains no rational curves, and therefore $\Omega_H\otimes \mathcal{O}_{\mathbb{P}^N}(2)$ is ample.  \\

\textit{Acknowledgement}. The author wishes to thank C. Mourougane for his many advices, and S. Diverio for helpful discussions and comments.

\end{document}